\newcommand{\eps}{\varepsilon}
\newcommand{\Ob}{\mathcal{O}}
\newcommand{\bbR}{\mathbb{R}}
\newcommand{\bbS}{\mathbb{S}}
\newcommand{\bbT}{\mathbb{T}}
\newcommand{\bbZ}{\mathbb{Z}}
\newcommand{\Z}{\bbZ}
\newcommand{\rg}{\mathrm{g}}
\newcommand{\cC}{\mathcal{C}}
\newcommand{\CIR}{\cC}
\newcommand{\cF}{\mathcal{F}}
\newcommand{\cG}{\mathcal{G}}
\newcommand{\cH}{\mathcal{H}}
\newcommand{\CZ}{\mathrm{LC}}
\newcommand{\LZ}{\mathrm{LZ}}
\newcommand{\DZ}{\mathrm{DZ}}
\newcommand{\bes}{\boldsymbol{\emptyset}}
\newcommand{\bfA}{\mathbf{A}}
\newcommand{\bfC}{\mathbf{C}}
\newcommand{\bfG}{\mathbf{G}}
\newcommand{\bfH}{\mathbf{H}}
\newcommand{\bfK}{\mathbf{K}}
\newcommand{\bfP}{\mathbf{P}}
\newcommand{\pugraph}{\bar\bfP}
\newcommand{\cugraph}{\bar\bfC}
\newcommand{\bfn}{\mathbf{n}}
\newcommand{\egraph}{\bes}
\newcommand{\G}{\Gamma}
\newcommand{\g}{\gamma}
\newcommand{\Sync}{\Theta^\mathrm{sync}}
\newcommand{\Splay}{\Theta^\mathrm{splay}}
\newcommand{\rset}[2]{\left\lbrace\, #1\,\left|\;#2\right.\right\rbrace}
\newcommand{\set}[2]{\rset{#1}{#2}}
\newcommand{\sset}[1]{\left\lbrace #1\right\rbrace}
\newcommand{\tabs}[1]{\big|#1\big|}
\DeclareMathOperator{\inte}{int}
\DeclareMathOperator{\fix}{Fix}
\newtheorem{prop}{Proposition}
\newtheorem{lem}{Lemma}
\newtheorem{cor}{Corollary}
\theoremstyle{definition}
\newtheorem{defn}{Definition}
\theoremstyle{remark}
\newtheorem{rem}{Remark}
\author{Peter Ashwin, Christian Bick, Camille Poignard\\
EPSRC Centre for Predictive Modelling in Healthcare\\
and Centre for Systems, Dynamics and Control,\\
Department of Mathematics,\\
University of Exeter, Exeter EX4 4QF, UK}
\title{State-dependent effective interactions in oscillator networks through coupling functions with dead zones}
\begin{document}

\maketitle

\begin{abstract}
The dynamics of networks of interacting dynamical systems depend on the nature of the coupling between individual units. We explore networks of oscillatory units with coupling functions that have ``dead zones'', that is, the coupling functions are zero on sets with interior. For such networks, it is convenient to look at the effective interactions between units rather than the (fixed) structural connectivity to understand the network dynamics. For example, oscillators may effectively decouple in particular phase configurations. Along trajectories the effective interactions are not necessarily static, but the effective coupling may evolve in time. Here, we formalize the concepts of dead zones and effective interactions. We elucidate how the coupling function shapes the possible effective interaction schemes and how they evolve in time.
\end{abstract}

\maketitle
\allowdisplaybreaks

\newcommand{\TRt}{\T^2\times\bbR^2}
\newcommand{\avOb}{\langle\Ob\rangle}
\newcommand{\lsOb}{\limsup\Ob}

\newcommand{\sm}{\smallsetminus}
\newcommand{\abs}[1]{\left|#1\right|}

\section{Introduction}
\label{sec:intro}

Many systems in applied sciences can be seen as systems of coupled units that mutually influence each other, such as interacting neurons of an animal's nervous system. Moreover, the system's functionality often depends on emergent properties of this dynamical system. The dynamical behaviour of a network composed of coupled systems depends on a number of factors. One might wish to separate these factors into three broad categories: Firstly, the dynamics of component systems (the \emph{unit dynamics}) in isolation, secondly the structure of the graph of couplings between the systems (the \emph{network structure} viewed as a directed graph where the nodes are systems and the edges are connections) and thirdly the nature of the interactions (we refer in general to this as \emph{coupling functions}~\cite{Staetal2017}). 

This approach can be limiting in several ways. First, it is well-known that coupling isolated systems with simple unit dynamics may result in relatively simple dynamics (such as synchronization), equally it can result in emergence of qualitatively different dynamics~\cite{Pikovsky01}. While a graph structure---as a graph of connections---is an efficient way to encode linearly weighted coupling between dynamical units, it may not capture higher-order, multi-way coupling. Such interactions are typically not ``pairwise'' but there are nonlinear interactions between three or more nodes, for example, if the input from unit~2 to unit~1 is modulated by unit~3.
This leads to more general \emph{non-pairwise coupling} that has recently been investigated in for coupled oscillators (see, for example,~\cite{Tanaka2011a, AshBicRod2016, Bick2017c}) and in a broader contexts such as ecological networks~\cite{Levine2017}.
Second, even if one assumes linearly weighted pairwise couplings, the {coupling} function itself is often assumed to be fairly simple in some sense:
In the case of weakly coupled oscillatory units, the interaction is typically 
{never assumed to vanish on any interval of phase differences.}

Here we analyse networks with more complex coupling functions that allow dynamical units to (effectively) dynamically decouple and recouple as the system evolves. More specifically, the dynamical systems we investigate have state-dependent interactions that arise through \emph{dead zones} in the coupling function. Intuitively speaking, in the dead zones of a {coupling} function there is no interaction. We call the complements \emph{live zones}; we formalize these concepts below. Although a wide range of adaptive and time-dependent networks have been studied in the past, this sort of coupling between network structure and dynamics has been overlooked in many contexts, probably because the coupling functions considered may be thought to be pathological (the functions have nontrivial variation at some regions and trivial variation at others and so cannot be analytic).
State-dependent dynamics induced by coupling with dead zones relate dynamical models in systems biology, through piecewise linear dynamics characterized by thresholds (see, for example, \cite{Glass1973,Glass-Pasternack1978a,Edwards2000,Hahnloser2003a,Huttinga2018,Curto2019}) or continuous modelling (e.g.,~\cite{Goodwin1965,Tyson}).
However, the investigation in these specific settings have focused almost exclusively on the asymptotic behaviour (whether synchronized, periodic~\cite{Poignard2018g, Poignard2016g, Hastings1977, Tyson3}, or chaotic~\cite{Poignard2013,Shahrear2015}) of the underlying high-dimensional coupled system rather than on the dynamics of the effective interactions between individual subsystems of a network. This is also the case in~\cite{Soule2003} and~\cite{Pecou2012}, where the authors have related the existence of circuits in the graph of interactions to the existence of multistable or stable limit cycles in phase space.

In this paper, we elucidate the interplay between dynamics and \emph{effective} interactions in networks of coupled phase oscillators. We first define the notion of a dead zone for such systems, which leads to the definition of an effective coupling graph at a particular state. The network dynamics are determined by the effective interactions: The dynamics at a particular point are determined by the (state-dependent) effective interactions which may change over time. Hence, the dynamics are determined by both the structural properties (what effective interactions are possible) and the dynamical properties (whether and how do the effective interactions change over time) of the system. Our contribution is threefold. First, we consider the structural question what effective interaction graphs are possible for a given network structure by careful design of coupling functions and examination of the dead zones. Second, we give a result on how the effective interactions do (or do not) change as time evolves. Third, we give instructive examples how dead zones shape the network dynamics; in particular, for a fully connected network and a coupling function for which the dynamics are fully understood, a single dead zone can induce nontrivial periodic dynamics.

While we restrict ourselves to dead zones in coupled phase oscillators, we note that these concepts are likely to be applicable to more general network dynamical systems in a wider range of contexts.
Indeed, even in networks that appear structurally simple (for example, networks that are all-to-all coupled from a structural point of view) the existence of dead zones can induce new dynamics. A decomposition of phase space into regions of identical effective interactions yields a natural coarse graining of the system: It can be understood in terms of the transitions between effective interactions similar to the state transition diagrams in~\cite{Glass-Pasternack1978a}.
Such a dynamical decomposition can provide a framework for network dynamical systems with coupling that has ``approximate'' dead zones---regions where the coupling is small but nonzero.
Hence, we anticipate that notable examples and applications may arise in systems biology as discussed above, where many studies use this type of active/inactive interactions, neuroscience, where for example state- and time-dependent interactions may arise for example through mechanisms such as spike-time dependent plasticity~\cite{refSTDP} or refractory periods, or continuous opinion models~\cite{Deffuant2000,Hegselmann2002} where agents only interact if their opinion is sufficiently close.

\subsection{Dead zones for phase oscillators}
\label{sec:DeadzonesIntro}

The particular class of system that we study in this paper has particularly simple unit dynamics (coupled phase oscillators) and pairwise coupling so that a network of interactions and pairwise coupling functions are appropriate. These models arise naturally in a range of applications where there are coupled limit cycle oscillators and the coupling is weak compared to the limit cycle stability~\cite{Izhikevich2007, AshComNic2016}. More precisely, we assume that the phase~$\theta_k\in \bbT:=\bbR/(2\pi\bbZ)$ of oscillator~$k\in\sset{1, \dotsc, N}$ evolves according to
\begin{equation}\label{eq:PhaseOscCoup}
\dot\theta_k=\omega + \sum_{j=1}^N A_{jk}\rg(\theta_j-\theta_k),
\end{equation}
where~$\omega$ is the fixed {intrinsic frequency} of all oscillators,
$A_{jk}\in\sset{0,1}$ encode the coupling topology between oscillators (we assume no self-coupling, $A_{kk}=0$), 
and the (non-constant) \emph{coupling function $\rg:\bbT\to\bbR$} determines how the oscillators influence each other.
A graph~$\bfA$ is associated with the adjacency matrix~$(A_{jk})$---the \emph{structural coupling graph} that encodes whether oscillator~$k$ can receive input from oscillator~$j$.
We constrain ourselves here to phase oscillator networks where the oscillators have the same intrinsic frequency and the coupling function~$\rg$ is the same between all pairs of oscillators.

The coupling topology relates to dynamical properties of network dynamical systems such as~\eqref{eq:PhaseOscCoup}. For commonly studied coupling functions, properties of the structural coupling graph~$\bfA$ such as its spectrum~\cite{Fiedler1973, agaev2000} determine, for example, synchronization properties (complete or partial) of the network~\eqref{eq:PhaseOscCoup}: This is used in the master stability function approach of Pecora and Caroll~\cite{Pecora1998, Barahona2002} and the work of Wu and Chua~\cite{Wu1996}, who revealed the role played by the spectral gap and the spectral radius of~$\bfA$. Various conditions for complete synchronization of networks with arbitrary graph structures have been found using spectral properties~\cite{Belykh2005, Nishikawa2006a, Motter2003, Changpin2007, Pereira2014}.
Consequently, this helps understand the effects of structural perturbations on the synchronizability of networks, including~\eqref{eq:PhaseOscCoup} and networks where the unit dynamics are more complex (see~\cite{Pogromsky2001, Milanese2010, Nishikawa2017, Poignard2018, Poignard2019}).

Note that it is not necessarily sufficient to consider the structural coupling graph~$\bfA$ to determine dynamical properties: This is particularly the case if the coupling function~$\rg$ has~\emph{dead zones}, i.e., if it is zero over some interval of phase differences. In the presence of dead zones, we will define an \emph{effective coupling graph} of~\eqref{eq:PhaseOscCoup} as a subgraph of~$\bfA$, which encodes the effective interactions between oscillators at a particular point in phase space. By definition, the effective coupling graph is state-dependent and may change dynamically with time. As the system evolves, the network may even decouple into several components under the influence of dead zones. These networks with time-
and state-dependent links can also be viewed in the framework of ``asynchronous networks''~\cite{Bick2015, Bick2017a}.

Even though we assume that the uncoupled units are very simple and the functional form of interactions is the same, the possible dynamics of~\eqref{eq:PhaseOscCoup} may be very complex \cite{AshComNic2016}. We therefore mostly restrict the equations~\eqref{eq:PhaseOscCoup} to the case where the coupling is all-to-all (and thus fully symmetric), that is, $A_{kj}=1$ for all $j\neq k$, and the phase $\theta_k\in\bbT$ evolves according to
\begin{equation}\label{eq:PhaseOscSN}
\dot\theta_k=\omega + \sum_{j=1,j\neq k}^N \rg(\theta_j-\theta_k)
\end{equation}
for $k=1,\dotsc, N$. In spite of the high degree of symmetry of the system, the system shows a very rich variety of behaviour that includes synchronization~\cite{Acebron2005}, clustering~\cite{OroMoeAsh2009}, heteroclinic dynamics~\cite{Ashwin2007}, and chaos~\cite{Bick2011}; see also~\cite{Ashwin1992,OroMoeAsh2009,Ashwin2016} for a discussion of the dynamics and bifurcations of~\eqref{eq:PhaseOscSN} and 
see~\cite{Pikovsky2015a} for a recent review.

Looking at the structural coupling graph the network associated with~\eqref{eq:PhaseOscSN} is rather simple, since the network is \emph{fully connected}.
However, this also means that there is a rich set of $2^{N(N-1)}$ subgraphs corresponding to setting (off-diagonal) entries of~$\bfA$ to zero. For a coupling function with dead zones, this means there is a very rich set of effective coupling graphs that can occur.

Our goal here is {to} explore some connections between properties of such dead zones, the effective coupling and the typical dynamics for such networks with these coupling functions for networks of the form~\eqref{eq:PhaseOscCoup} or \eqref{eq:PhaseOscSN}. As an example, Figure~\ref{fig:N=5schematic} shows some possible effective couplings that can be achieved by~\eqref{eq:PhaseOscSN} with $N=5$ and choices of coupling function with dead zones.
Section~\ref{sec:setting} presents a setting in which these dead zones can be defined. It also presents conditions in Proposition~\ref{prop:LocalProdStruct} for local skew product structure that appears in the dynamics due to the dead zones. We then address the following questions:
\begin{itemize}
\item[\bf Q0:] Given any subgraph of the structural coupling graph, is there a coupling function such that this subgraph is {\em realised} as the effective coupling graph for some point in the phase space?
\item[\bf Q1:] What is the relation between the coupling function, the set of possible subgraphs that can be realised, and the points where these realisations happen?
\item[\bf Q2:] How do the dynamics and effective couplings influence each other?
\end{itemize}

\begin{figure}
\centerline{
\includegraphics[width=12cm]{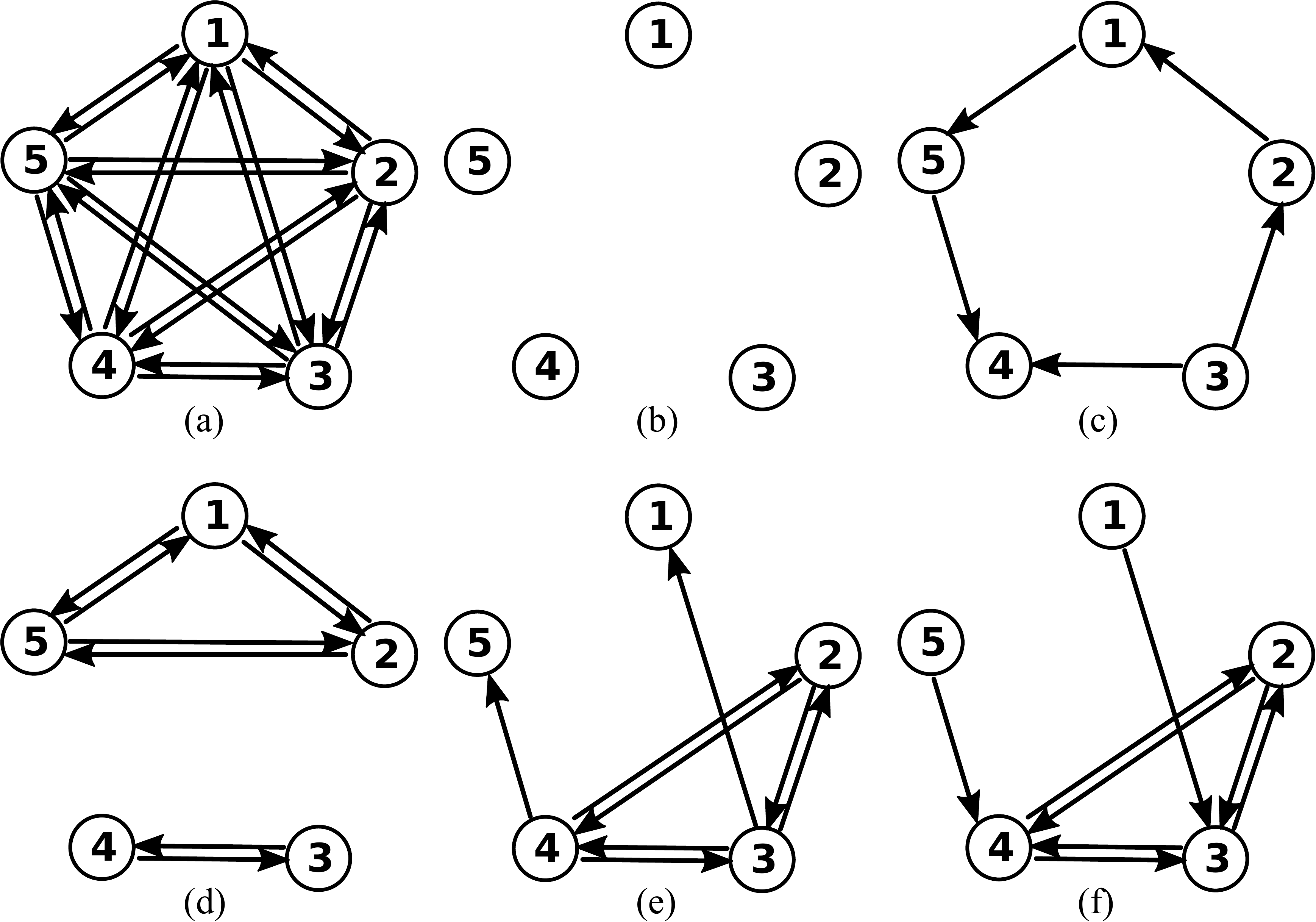}
}
\caption{(a) Coupling for the graph~$\bfK_5$ corresponding to the fully connected network~\eqref{eq:PhaseOscSN} with $N=5$. (b-f) show five examples of the $2^{5\times4}=1048576$ possible embedded subgraphs of~(a), i.e., having the same number of nodes as (a): by Proposition~\ref{prop:gentheta} we can show that all of these and more can be realised as effective coupling graphs for a coupling function~$\rg$ with dead zones. Panels~(b) and~(d) shows graphs with more than one component: (b) is the ``empty'' graph with no edges, (c) is a cycle of length~$5$ and (d,f) have nontrivial structure.  A typical trajectory of the system for such a system will visit several different effective coupling graphs under time-evolution. While (e) and (f) show similar structure Proposition~\ref{prop:realiseattractor}
shows that only~(e) can be realised in a dynamically stable manner as it contains a spanning diverging tree.}
\label{fig:N=5schematic}
\end{figure}

Section~\ref{sec:Realization} discusses some results related to these questions concerning effective coupling graphs: 
Proposition~\ref{prop:gentheta} answers {\bf Q0} positively in that it shows that any directed graph can be realised as an effective coupling for a suitable coupling function. A refinement of this proposition is given by Proposition~\ref{prop:delta} by showing it is possible to do it at a given length of live zone (small enough with respect to the number of nodes). We also consider specific subcases of {\bf Q1} by showing how symmetries of the points in the torus determine partially the effective coupling graphs.
Section~\ref{sec:transitions} moves on to {\bf Q2} and consider the interaction between effective coupling and dynamics: Proposition~\ref{prop:realiseattractor} (and Corollary~\ref{corstable}) show that any graph that admits a spanning diverging tree can not only be realised at some point in phase space but also this can be made dynamically stable. More generally, it seems that the interaction of dynamics and dead zones may be quite complex and so we explore some examples. Section~\ref{sec:examples} looks in detail at the realised effective coupling graphs for $N=2$ and $N=3$ with one dead zone. Finally, Section~\ref{sec:discuss} explores uses, generalisations and applications of these ideas.

\subsection{Graph theoretic preliminaries}

We briefly introduce some notions (and notation) that are used in this paper (see, e.g.,~\cite{Diestel2017} for more background on graph theory). Recall that a \emph{directed graph}~$\bfG$ is a pair $\bfG=(V,E)$ with a finite set of {\em vertices}~$V$ and directed {\em edges} $E\subset V^2$ between vertices. Depending on the context, we write~$V(\bfG)$, $E(\bfG)$ to denote the vertices and edges of~$\bfG$. A pair $(j, k)\in E$ is an \emph{edge from vertex~$j$ to vertex~$k$}. Since the graphs we consider here relate to network dynamics, we use terms {\em vertex/node}, and {\em edge/link} interchangeably. We will assume that the graphs do not contain self-loops, i.e., $(k,k)\not\in E$ for any~$k$. A vertex $k\in V$ is said to have an incoming edge if there exists another vertex~$j$ such that $(j,k) \in E$. Any graph~$\bfG$ can be identified with an \emph{adjacency matrix~$A^{\bfG}$} with coefficients $A^{\bfG}_{kj}=1$ if $(k,j)\in E(\bfG)$ and $A^{\bfG}_{kj}=0$ otherwise. We say the graph~$\bfG$ is \emph{undirected} if~$A^{\bfG}$ is symmetric, i.e., $A^{\bfG}$~is equal to its transpose, or equivalently, $(j,k)\in E$ if and only if $(k,j)\in E$. This means that an undirected graph has an even number of directed edges (see Figure~\ref{fig:N=5schematic}). Finally, a graph $\bfH=(V',E')$ is a \emph{subgraph of~$\bfG$}, and we write $\bfH\subset\bfG$, if $V'\subset V$ and $E'\subset E$. In this paper it will be convenient to consider a subgraph~$(V',E')\subset\bfG$ as an \emph{embedded subgraph}~$(V,E')$ by including all vertices~$V$ of $\bfG$.

Write $V_N=\sset{1,\dotsc,N}$. If~$\bfG=(V_N,E)$ and $\bfH\subset\bfG$ is an embedded subgraph, then the associated adjacency matrices~$A^{\bfG}$, $A^{\bfH}$ are $N\times N$ matrices. The \emph{fully connected graph}~$\bfK_N$ is the graph on~$V_N$ with $(j,k)\in E(\bfK_N)$ for all $j\neq k$; it has $N(N-1)$ edges. Similarly, let~$\bes_N = (V_N, \emptyset)$ denote the \emph{empty graph} with no edges. Note that~$\bfK_N$ and~$\bes_N$ are undirected {(see Figure~\ref{fig:N=5schematic})}. For $(p_1, p_2, \dotsc, p_r)\in V_N^r$,
let~$\bfP_{p_1,\dotsc,p_r}$ denote the \emph{directed path} with vertices ${p_1,\dotsc,p_r}$, i.e., the subgraph of~$\bfK_N$ with edge set
\[E(\bfP_{p_1,\dotsc,p_r}) = \set{(p_q, p_{q+1})}{q=1, \dotsc, r-1}\]
and similarly,
let $\bfC_{p_1,\dotsc,p_r}$ be the \emph{directed cycle} with vertices ${p_1,\dotsc,p_r}$ and edges $E(\bfC_{p_1,\dotsc,p_r}) = E(\bfP_{p_1,\dotsc,p_r})\cup\sset{(p_r, p_1)}$. The \emph{undirected path}~$\pugraph_{p_1,\dotsc,p_r}$ and \emph{undirected cycle}~$\cugraph_{p_1,\dotsc,p_r}$ are obtained by adding the reverse edges to~$\bfP_{p_1,\dotsc,p_r},\bfC_{p_1,\dotsc,p_r}$, respectively. Finally, let $\bfK_{p_1,\dotsc,p_r}$ be the \emph{fully connected subgraph} {on the set of nodes $\sset{p_1,\dotsc,p_r}$.
When convenient, we will identify the graphs $\bfP_{p_1,\dotsc,p_r}$, $\bfC_{p_1,\dotsc,p_r}$, and~$\bfK_{p_1,\dotsc,p_r}$ (and their undirected versions) with their corresponding embedded subgraphs with vertices~$V_N$.

A directed graph $\bfG=(V,E)$ is \textit{strongly connected} if given any two vertices in~$V$ there exists a directed path of edges in~$E$ between these two nodes. A directed graph~$\bfG$ is said to be \textit{weakly connected} if it is not strongly connected and its underlying undirected graph, obtained by ignoring the orientations of the edges, is strongly connected. A \textit{spanning diverging tree of $\bfG=(V,E)$} is a weakly connected subgraph of~$\bfG$ such that one node (the root node) has no incoming edges and all other nodes have one incoming edge {(for instance, the graph in Figure~\ref{fig:N=5schematic}(e) contains such a tree)}. Lastly, for any graph $\bfG=(V,E)$, an \textit{independent set $S$} is a set of nodes included in~$V$, for which any two nodes of~$S$ are never connected by an edge in~$E$. We say that~$\bfG$ is a $k$-partite graph if~$V$ admits a partition in~$k$ distinct independent sets.

\subsection{Symmetries, dynamics and graphs}

Let~$\bfG$ be a graph with $V(\bfG)=V_N$ and let~$\bbS_N$ be the symmetric group of all permutations of $V_N=\sset{1,\dotsc, N}$. The \emph{automorphisms of~$\bfG$}, denoted by
\[
\G(\bfG)=\set{\g\in \bbS_N}{A_{\g(k)\g(j)}=A_{jk} \text{ for all }j,k\in V_N},
\]
form a subgroup of~$\bbS_N$ under composition. Define the set of embedded subgraphs
\begin{align*}
\cH(\bfG)&=\set{\bfH=(V_N,E')}{\bfH\subset\bfG}
\end{align*}
and write $\cH_N=\cH(\bfK_N)$. Note that the group~{$\G(\bfG)$} naturally acts on~$\cH(\bfG)$: For $\bfH\in\cH(\bfG)$ and {$\g\in\G(\bfG)$} the image $\g \bfH$ is the graph with vertices~$V_N$ and edges
\[
E(\gamma \bfH) = \set{(\gamma (j), \gamma (k))}{(j,k)\in E(\bfH)}
\]
for {$\g\in\G(\bfG)$}. For this action, the \emph{isotropy group of the graph~$\bfH\subset \bfG$} is
\[
\Sigma_{\bfH} = \set{\g\in{\G(\bfG)}}{\g \bfH = \bfH}.
\]
Note that the isotropy group does not uniquely identify the subgraph, for example one can reverse the edges and get the same isotropy; however, it is a useful characterisation of the graph.

The group~$\bbS_N$ acts on~$\bbT^N$ by permuting components. 
Let~$\bfG\in\cH_N$. 
For $\Sigma\subset\G(\bfG) \subset\bbS_N$} we define the \emph{fixed point space} $\fix(\Sigma)=\set{\theta\in\bbT^N}{\g(\theta)=\theta\text{ for all }\g\in\Sigma}$. For a given $\theta\in\bbT^N$, the \emph{isotropy subgroup} of~$\theta$ is the group action is
$\Sigma_{\theta}=\set{\g\in {\G(\bfG)}}{\g(\theta)=\theta}$. 
The symmetries have a number of dynamical consequences for the oscillator network~\eqref{eq:PhaseOscCoup} for~$\bfG$ above being the structural coupling graph~$\bfA$: Note that~\eqref{eq:PhaseOscCoup} is equivariant with respect to the action of~{$\Gamma(\bfA)\times \bbT$ where $\Gamma(\bfA)\subset \bbS_N$} acts via permutation of the oscillators and $\phi\in\bbT$ acts via phase shifts
\begin{equation}
(\theta_1,\ldots\theta_N)\mapsto(\theta_1+\phi,\ldots\theta_N+\phi).
\end{equation}
The fixed point space of any isotropy subgroup of {$\Gamma(\bfA)\times \bbT$} is dynamically invariant \cite{Ashwin1992}. It is often useful to consider behaviour of \eqref{eq:PhaseOscCoup} in terms of the group orbits of~$\bbT$. Equivalently, the quotient by $\bbT$ corresponds to considering the dynamics in phase difference coordinates, and relative equilibria (equilibria for the quotient system) typically correspond to periodic orbits for the original system. 

For the structural coupling graph~$\bfA=\bfK_N$, we obtain the all-to-all coupled oscillator network~\eqref{eq:PhaseOscSN}, which is $\Gamma(\bfK_N)\times \bbT=\bbS_N\times \bbT$ equivariant. In this case, the dynamics on the full phase space~$\bbT^N$ are completely determined by the dynamics on the \emph{canonical invariant region (CIR)} \cite{Ashwin1992,Ashwin2016}
\begin{equation}
\cC = \set{\theta = (\theta_1, \dotsc, \theta_N)}{\theta_1<\theta_2<\dotsb<\theta_N<2\pi}.
\end{equation}
The full synchrony and splay phase configurations
\begin{align*} 
\Sync &= (\phi, \dotsc, \phi),&
\Splay &= \left(\phi, \phi+\frac{2\pi}{N}, \dotsc, \phi+\frac{(N-1)2\pi}{N}\right)\in\cC
\end{align*}
are relative equilibria of the dynamics. There is a residual action of~$\Z_N := \Z/N\Z$ on the canonical invariant region and~$\Splay$ is the fixed point of this action~\cite{Ashwin1992}.

\section{From dead zones to effective coupling graphs}
\label{sec:setting}

In this section we define dead zones for a {coupling} function and introduce the resulting effective coupling graph and its properties. We will restrict to a suitable class of coupling functions that have dead zones but are otherwise smooth and in general position; clearly this could be easily generalised for example to coupling functions with only finite differentiability.

\begin{defn}
Suppose that $\rg:\bbT\to\bbR$ is a smooth~$2\pi$-periodic function.
\begin{itemize}
\item
A coupling function~$\rg$ is \emph{locally constant at $\theta^0\in\bbT$ with value $c\in\bbR$} if there is an open set~$U$ with $\theta_0\in U\subset\bbT$ such that $\rg(U)\equiv c$. Define~$\CZ(\rg)$ to be the set of locally constant points of~$\rg$.
\item
A coupling function~$\rg$ is \emph{locally null at $\theta^0\in\bbT$} if it is locally constant with $c=0$. Let~$\DZ(\rg)\subset \CZ(\rg)$ denote the set of locally null points of~$\rg$.
\item
A coupling function~$\rg$ has \emph{simple dead zones} if~$\DZ(\rg)$ has finitely many connected components and $\CZ(\rg)=\DZ(\rg)$, i.e., if there is a finite set of locally constant regions, and all are locally null.
\end{itemize}
\end{defn}

\begin{defn} 
Let~$\rg$ be a coupling function with simple dead zones. Any connected component of~$\DZ(\rg)$ is a \emph{dead zone} of~$\rg$. Connected components of the complements $\LZ(\rg)=\bbT\setminus \DZ(\rg)$ are \emph{interaction} or \emph{live zones}.
\end{defn}

Here, we will only consider the case of \emph{simple dead zones}:
in the rest of the paper, we will implicitly consider only \emph{coupling functions with simple dead zones}.
The class of coupling function with simple dead zones excludes (smooth approximation of) piecewise constant coupling functions. These may have nontrivial dynamics that are solely given by the different frequencies in the region where the coupling function is locally constant. Such nontrivial dynamics are clearly of interest in some applications, but is beyond the scope of this paper.

\begin{defn}
We say that~$\rg$ is \emph{dead zone symmetric} if $-\DZ(\rg)=\DZ(\rg)$ modulo~$2\pi$, i.e., if whenever $\phi\in\bbT$ is in a dead zone, then~$-\phi$ also belongs to a dead zone.
\end{defn}

{As an illustration, Figure~\ref{fig:dzN3f1} in Section \ref{sec:examples} provides examples of coupling functions (which are dead zone symmetric or not) with one dead zone.}

\subsection{Effective coupling graphs and their symmetries}

Suppose that~$\rg$ is a coupling function for~\eqref{eq:PhaseOscCoup} with structural coupling  graph~$\bfA$ given by the adjacency matrix~$(A_{jk})$ and let $\theta\in\bbT^N$. We say a node~$k$ is \emph{$\rg$-effectively influenced by node~$j$ at~$\theta$} for~\eqref{eq:PhaseOscCoup} if $A_{jk}=1$ and $\theta_j-\theta_k\not\in\DZ(\rg)$. 

\begin{defn}
The \emph{effective coupling graph}~{$\cG_{\rg,\bfA}(\theta)$} of~\eqref{eq:PhaseOscCoup} with coupling function~$\rg$ at~$\theta\in\bbT^N$ is the graph on~$N$ vertices with edges
\begin{align*}
E({\cG_{\rg,\bfA}(\theta)})=\set{(j,k)}{A_{jk}\neq 0 \text{ and }\theta_j-\theta_k\not \in \DZ(\rg)}.
\end{align*}
\end{defn}

Conversely, an edge $(j,k)\not\in E({\cG_{\rg,\bfA}(\theta)})$ if $A_{jk}=0$ (the edge is not contained in~$\bfA$) or $\theta_j-\theta_k\in\DZ(\rg)$ (the phase difference is in a dead zone).

Clearly ${\cG_{\rg,\bfA}(\theta)}\subset \bfA\subset \bfK_N$, and this will be a proper subgraph (that is, it differs from~$\bfA$ by at least one edge) for some $\theta\in\bbT^N$ if~$\rg$ has at least one dead zone. 
For the system~\eqref{eq:PhaseOscCoup} with coupling function~$\rg$ and given $\bfH\subset\bfK_N$, define
\begin{align}
{\Theta_{\rg,\bfA}(\bfH)}=\set{\theta\in\bbT^N}{{\cG_{\rg,\bfA}(\theta)}=\bfH}.
\end{align}

\begin{defn}
If~${\Theta_{\rg,\bfA}(\bfH)}$ is not empty, then~$\bfH$ is \emph{realised as an effective coupling graph for~\eqref{eq:PhaseOscCoup} with coupling function~$\rg$}. Moreover, a graph~$\bfH$ \emph{can be realised as an effective coupling graph for~\eqref{eq:PhaseOscCoup}} if there exists a coupling function~$\rg$ for which~${\Theta_{\rg,\bfA}(\bfH)}$ is not empty.
\end{defn}

For the special case $\bfA = \bfK_N$, that is, the oscillator network~\eqref{eq:PhaseOscSN}, we simply write~$\cG_{\rg}(\theta)$ for the effective coupling graph with edges
\[E(\cG_\rg(\theta))=\set{(j,k)}{\theta_j-\theta_k\not \in \DZ(\rg)}.\]
Similarly we write
\begin{align}
{\Theta_{\rg}(\bfH)}=\set{\theta\in\bbT^N}{{\cG_{\rg}(\theta)}=\bfH}
\end{align}
for the regions of phase space with a particular effective coupling graph. Note that the sets $\Theta_{\rg}(\bfH)$, $\bfH\in\cH_N$, partition the CIR~$\cC$.

For particular structural coupling graphs~$\bfA$ of~\eqref{eq:PhaseOscCoup} there are a large number of symmetries, i.e., the automorphism group~$\Gamma(\bfA)$ may be large \cite{Pecora1998}; it is maximal for $\bfA=\bfK_N$. At the same time,~$\Gamma(\bfA)$ acts on the underlying phase space. We now show how the symmetry of a point~$\theta\in\bbT^N$ relates to the symmetries of the effective coupling graph at~$\theta$.

\begin{lem}
Consider the system~\eqref{eq:PhaseOscCoup} with structural coupling graph~$\bfA$ and any coupling function~$\rg$. For any $\theta\in\bbT^N$, we have ${\cG_{\rg,\bfA}(\g \theta)} = \g {\cG_{\rg,\bfA}(\theta)}$ for all~$\g\in\G(\bfA)$.
\end{lem}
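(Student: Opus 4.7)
The plan is to unpack both sides of the claimed equality purely by definition, and then to use the defining property of $\g\in\G(\bfA)$ (namely $A_{\g(k)\g(j)}=A_{jk}$) together with the fact that the action of $\bbS_N$ on $\bbT^N$ permutes coordinates.

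First, I would fix the convention for the action on phase space, writing $(\g\theta)_k=\theta_{\g^{-1}(k)}$, so that phase differences transform as $(\g\theta)_j-(\g\theta)_k=\theta_{\g^{-1}(j)}-\theta_{\g^{-1}(k)}$. Next I would expand the left-hand side by definition:
\[
(j,k)\in E(\cG_{\rg,\bfA}(\g\theta))
\iff A_{jk}\neq 0 \text{ and } \theta_{\g^{-1}(j)}-\theta_{\g^{-1}(k)}\notin\DZ(\rg).
\]
Now I would apply the automorphism property: $A_{jk}=A_{\g(\g^{-1}(k))\,\g(\g^{-1}(j))}$ already equals $A_{\g^{-1}(j)\g^{-1}(k)}$ by the defining identity $A_{\g(k)\g(j)}=A_{jk}$ applied with $j,k$ replaced by $\g^{-1}(j),\g^{-1}(k)$. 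Combining these two conditions yields exactly
\[
(\g^{-1}(j),\g^{-1}(k))\in E(\cG_{\rg,\bfA}(\theta)).
\]

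Finally I would compare with the definition of the $\G(\bfA)$-action on subgraphs: $(j,k)\in E(\g\bfH)$ iff $(\g^{-1}(j),\g^{-1}(k))\in E(\bfH)$. Setting $\bfH=\cG_{\rg,\bfA}(\theta)$ gives the equality of edge sets, and since both graphs are embedded subgraphs of $\bfA$ with the same vertex set $V_N$, this equality of edge sets is exactly the desired equality of graphs. There is no real obstacle here: the statement is an equivariance identity that follows mechanically from the two definitions (of $\cG_{\rg,\bfA}$ and of the $\G(\bfA)$-action), so the only care needed is keeping the direction of the permutation straight on coordinates versus on edges; I would flag this by remarking that the property uses only that $\g$ is a graph automorphism and that the coupling function $\rg$ is the same across all edges, which ensures that evaluating $\rg$ at a permuted phase difference produces an identical dead-zone membership condition.
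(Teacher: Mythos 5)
Your argument is correct and is essentially the paper's proof: both unpack the definition, observe that permuting coordinates permutes the phase differences so dead-zone membership is preserved, and use $\g\in\G(\bfA)$ to preserve the adjacency condition (you are in fact slightly more explicit than the paper about the $A_{jk}\neq 0$ factor and about keeping the action on coordinates consistent with the action on edge sets). The only blemish is the intermediate identity $A_{jk}=A_{\g(\g^{-1}(k))\,\g(\g^{-1}(j))}$, which literally asserts $A_{jk}=A_{kj}$; this is an artifact of the transposed indices in the paper's displayed definition of $\G(\bfG)$, and with the intended automorphism condition $A_{\g(j)\g(k)}=A_{jk}$ your substitution yields directly $A_{jk}=A_{\g^{-1}(j)\,\g^{-1}(k)}$, which is exactly what the proof needs.
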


\begin{proof}
Note that $[\g\theta]_k=\theta_{\g(k)}$, where $[\,\cdot\,]_k$ refers to the $k$th component, and so $[\g\theta]_j-[\g\theta]_k\in \DZ(\rg)$ if and only if 
$\theta_{\g(j)}-\theta_{\g(k)}\in\DZ(\rg)$. These are the edges of~$\g{\cG_{\rg,\bfA}(\theta)}$.
\end{proof}

\begin{cor}
\label{cor:sigma}
Consider the system~\eqref{eq:PhaseOscCoup} with structural coupling graph~$\bfA$ and any coupling function~$\rg$. For any $\theta\in\bbT^N$, we have
\[\Sigma_{\theta}\subset \Sigma_{{\cG_{\rg,\bfA}(\theta)}}\subset\G(\bfA).\]
\end{cor}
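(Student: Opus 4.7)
The plan is to observe that this corollary follows almost immediately from the preceding lemma together with the definitions of the two isotropy groups, so the main task is simply to unpack notation cleanly rather than do any real work.

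For the second inclusion $\Sigma_{\cG_{\rg,\bfA}(\theta)}\subset\G(\bfA)$, I would just note that it is by definition: the isotropy group $\Sigma_{\bfH}$ of any subgraph $\bfH\subset\bfA$ under the action of $\G(\bfA)$ on $\cH(\bfA)$ is a subgroup of $\G(\bfA)$, and since $\cG_{\rg,\bfA}(\theta)\subset\bfA$ lies in $\cH(\bfA)$, this inclusion is immediate.

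For the first inclusion $\Sigma_\theta\subset\Sigma_{\cG_{\rg,\bfA}(\theta)}$, I would take an arbitrary $\g\in\Sigma_\theta$, so that $\g\in\G(\bfA)$ and $\g(\theta)=\theta$. Applying the preceding lemma to this $\g$ gives
\[
\g\,\cG_{\rg,\bfA}(\theta)=\cG_{\rg,\bfA}(\g\theta)=\cG_{\rg,\bfA}(\theta),
\]
where the first equality is the lemma and the second uses $\g(\theta)=\theta$. Hence $\g\in\Sigma_{\cG_{\rg,\bfA}(\theta)}$, proving the inclusion.

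There is no real obstacle here; the content of the corollary is entirely packed into the lemma, and the argument is a one-line symbolic manipulation. The only thing to be mildly careful about is to keep straight that the two isotropy subgroups $\Sigma_\theta$ and $\Sigma_{\cG_{\rg,\bfA}(\theta)}$ are defined with respect to two \emph{different} actions of $\G(\bfA)$ (one on $\bbT^N$ by permutation of coordinates, one on $\cH(\bfA)$ by the induced action on subgraphs), so the lemma is exactly the bridge between them.
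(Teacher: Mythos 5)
Your proof is correct and follows exactly the paper's own argument: both use the preceding equivariance lemma together with $\g\theta=\theta$ to get $\g\,\cG_{\rg,\bfA}(\theta)=\cG_{\rg,\bfA}(\g\theta)=\cG_{\rg,\bfA}(\theta)$, and both treat the inclusion $\Sigma_{\cG_{\rg,\bfA}(\theta)}\subset\G(\bfA)$ as immediate from the definition of the isotropy group of a subgraph. Your remark distinguishing the two group actions is a helpful clarification but does not change the substance.
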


\begin{proof}
To see this, note that if $\g\in\Sigma_{\theta}$ then $\g\theta=\theta$ and so
${\cG_{\rg,\bfA}(\theta)}={\cG_{\rg,\bfA}(\g \theta)}=\g{\cG_{\rg,\bfA}(\theta)}$
which implies that $\g\in\Sigma_{\cG_{\rg,\bfA}(\theta)}$. 
\end{proof}

Note that the reverse containment of Corollary~\ref{cor:sigma} does not necessarily hold, for example if there are no dead zones (i.e., if~$\DZ(\rg)$ is empty) then clearly $\Sigma_{\theta}=\G(\bfA)$ for all~$\theta$.

\begin{rem}
Note that while~$\CIR$ is a fundamental region for the dynamics of the all-to-all coupled network~\eqref{eq:PhaseOscSN}, the effective coupling graphs can differ between symmetric copies of~$\CIR$.
\end{rem}

\subsection{Local (skew-)product structure and asynchronous networks}
\label{sec:SkewProducts}

In this section we show that the effective coupling graph at a point captures essential dynamical information. In particular, we have that, locally around a generic point, the vector field factorizes into factors that correspond to (weakly) connected components of the effective coupling graph.

For $v = \sset{v_1<v_2<\dotsb<v_r}\subset V_N$ let $\pi_v:\bbT^N\to\bbT^r$ denote the projection of~$\bbT^N$ onto the coordinates in~$v$. We write $\bbT^v := \pi_v(\bbT^N)$ and $\theta_v = (\theta_{v_1}, \dotsc, \theta_{v_r})$ are the coordinates in $\bbT^v$. Suppose that $v^1, v^2\subset V_N$ partition~$V_N$, that is, $v^1\cap v^2 = \emptyset$, $v^1\cup v^2 = V_N$.  Write $r_k = \tabs{v^k}$ for the length of the vector~$v^k$ and we identify $\bbT^N\cong\bbT^{r_1}\times\bbT^{r_2}$ with elements $\theta = (\theta_{v^1}, \theta_{v^2})$ through the natural isomorphism that reorders coordinates appropriately.

\begin{defn}
Consider a general ODE on the $N$-torus,
\begin{equation}
\dot \theta = F(\theta),
\label{eq:odeF}
\end{equation} 
with $\theta\in\bbT^N$, $F:\bbT^N\rightarrow \bbT^N$ some smooth function, and $v = (v^1, v^2)$ a partition of $V_N$.
\begin{enumerate}[(a)]
\item The system has a \emph{local skew-product structure $v^1\to v^2$} at $\theta\in\bbT^N$ if there is an open neighborhood~$U$ of~$\theta$ and functions $F_{(1)},F_{(2)}$ such that $F(\theta) = \big(F_{(1)}(\theta_{v^1}), F_{(2)}(\theta_{v^1}, \theta_{v^2})\big)$ for all $\theta\in U$.
\item The system has a \emph{local product structure with respect to~$v$} at $\theta\in\bbT^N$ if there is an open neighborhood~$U$ of~$\theta$ and functions $F_{(1)}$, $F_{(2)}$ such that $F(\theta) = \big(F_{(1)}(\theta_{v^1}), F_{(2)}(\theta_{v^2})\big)$ for all $\theta\in U$.
\end{enumerate}
\end{defn}
The second statement is equivalent to~$F$ having a local skew product structure $v^1\to v^2$ and $v^2\to v^1$ at $\theta$.
For $A\subset\bbT^N$ let $\inte(A)$ denote the interior of~$A$.

\begin{lem}\label{lem:Generic}
Consider the dynamics~\eqref{eq:PhaseOscCoup} with a coupling function~$\rg$. Generically, $\theta\in\inte(\Theta_{\rg,\bfA}(\bfH))$ for some~$\bfH$.
\end{lem}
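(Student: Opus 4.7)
The plan is to interpret ``generically'' as ``on an open dense subset of $\bbT^N$ of full Lebesgue measure'' and to exhibit the complement as a finite union of codimension-one affine subtori. The key observation is that, because $\rg$ has simple dead zones, the only obstruction to $\theta$ lying in the interior of some $\Theta_{\rg,\bfA}(\bfH)$ is that some phase difference $\theta_j-\theta_k$ happens to sit exactly on $\partial\DZ(\rg)$, and this boundary is a finite set.

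First I would note that $\DZ(\rg)$ is open by definition (each locally null point has an open neighborhood on which $\rg\equiv 0$), and since $\rg$ has simple dead zones it has finitely many connected components, each an open arc. Therefore $B:=\partial\DZ(\rg)\subset\bbT$ is finite, and one has the disjoint decomposition
\[
\bbT = \DZ(\rg)\;\sqcup\; \inte(\LZ(\rg))\;\sqcup\; B,
\]
where $\inte(\LZ(\rg))=\bbT\setminus\overline{\DZ(\rg)}$ is also open. Define the ``bad set''
\[
S \;=\; \bigcup_{(j,k):\,A_{jk}=1}\set{\theta\in\bbT^N}{\theta_j-\theta_k\in B}.
\]
This is a finite union of codimension-one affine subtori, hence closed, nowhere dense, and of Lebesgue measure zero.

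Next I would verify that every $\theta\notin S$ lies in $\inte(\Theta_{\rg,\bfA}(\bfH))$ for $\bfH=\cG_{\rg,\bfA}(\theta)$. Indeed, for any pair $(j,k)$ with $A_{jk}=1$, the assumption $\theta\notin S$ forces $\theta_j-\theta_k$ to belong to one of the two open sets $\DZ(\rg)$ or $\inte(\LZ(\rg))$. By openness, the same alternative persists for all $\theta'$ in a sufficiently small neighborhood $U$ of $\theta$; taking the intersection over the finitely many pairs $(j,k)$ with $A_{jk}=1$, we obtain a neighborhood of $\theta$ on which the membership of each $\theta_j-\theta_k$ in $\DZ(\rg)$ is constant. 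Thus $\cG_{\rg,\bfA}(\theta')=\cG_{\rg,\bfA}(\theta)=\bfH$ throughout this neighborhood, i.e.\ $\theta\in\inte(\Theta_{\rg,\bfA}(\bfH))$.

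Combining the two steps, $\bbT^N\setminus S$ is open and dense (complement of a closed nowhere-dense set) and of full measure, and every point of it satisfies the conclusion. The main obstacle, such as it is, is purely bookkeeping: making sure one uses the interior of the live zone rather than just the live zone itself (since $\LZ(\rg)$ is closed and its boundary points coincide with those of $\DZ(\rg)$), and that the finiteness of connected components of $\DZ(\rg)$ guarantees finiteness of $B$ so that $S$ is negligible.
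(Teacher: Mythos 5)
Your proof is correct and follows essentially the same route as the paper: the paper's argument identifies the complement of $\bigcup_{\bfH}\inte(\Theta_{\rg,\bfA}(\bfH))$ with the set of points where some phase difference lies in $\partial\DZ(\rg)$, a finite union of algebraic (codimension-one) sets, which is exactly your bad set $S$. Your write-up just fills in the details the paper leaves implicit (openness of $\DZ(\rg)$ and $\inte(\LZ(\rg))$, finiteness of $\partial\DZ(\rg)$, and the restriction to pairs with $A_{jk}=1$), so no further changes are needed.
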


\begin{proof}
We have that
\[\bbT^N\sm\bigcup_{H\subset\bfG}\inte(\Theta_{\rg,\bfA}(\bfH)) = \set{\theta = (\theta_1, \dotsc, \theta_N)\in\bbT^N}{\exists k\neq j:\theta_k-\theta_j\in\partial\DZ(\rg)}\]
is a union of finitely many algebraic sets.
\end{proof}

Note that this does not imply that all~$\Theta_{\rg,\bfA}(\bfH)$ have nonempty interior. Consider for example the fully connected network~\eqref{eq:PhaseOscSN} with~$\bfA=\bfK_N$ and a coupling function~$\rg$ such that $0\in\partial\DZ(\rg)$. Then $\Theta_\rg(\bfK_N)$ has an isolated point.

\begin{lem}\label{lem:NoDependence}
Consider the dynamics of~\eqref{eq:PhaseOscCoup} written in the form~\eqref{eq:odeF} for a coupling function~$\rg$. Suppose that $\theta\in\inte(\Theta_{\rg,\bfA}(\bfH))$. Then $(j,k)\not\in E(\bfH)$ if and only if there exists a neighborhood~$U\subset\bbT^N$ of~$\theta$ such that $(\partial_{\theta_j} F_k)(\theta)=0$ for all $\theta\in U$.
\end{lem}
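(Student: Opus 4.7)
My plan is to unfold the definitions and read off both implications from the explicit form of the vector field in \eqref{eq:PhaseOscCoup}. For $k\in V_N$ we have
\[
F_k(\theta) = \omega + \sum_{j\neq k} A_{jk}\,\rg(\theta_j-\theta_k),
\]
so for $j\neq k$ the partial derivative is the single term
\[
(\partial_{\theta_j} F_k)(\theta) = A_{jk}\,\rg'(\theta_j-\theta_k),
\]
and this vanishes identically on a neighborhood of $\theta$ precisely when $A_{jk}=0$ or $\rg'$ vanishes on a neighborhood of $\theta_j-\theta_k$.

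For the forward direction, assume $(j,k)\notin E(\bfH)$. By the definition of the effective coupling graph either $A_{jk}=0$, in which case $\partial_{\theta_j}F_k$ is identically zero on all of $\bbT^N$, or $A_{jk}=1$ and $\theta_j-\theta_k\in\DZ(\rg)$. In the latter case, because $\theta\in\inte(\Theta_{\rg,\bfA}(\bfH))$, there is a neighborhood $U$ of $\theta$ on which $\cG_{\rg,\bfA}(\cdot)\equiv\bfH$, so for every $\theta'\in U$ we still have $\theta'_j-\theta'_k\in\DZ(\rg)$; since $\DZ(\rg)$ is open by definition and $\rg$ vanishes on it, $\rg'(\theta'_j-\theta'_k)=0$ throughout $U$.

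For the reverse direction, suppose $(\partial_{\theta_j}F_k)$ vanishes on a neighborhood $U$ of $\theta$. If $A_{jk}=0$ then $(j,k)\notin E(\bfA)\supset E(\bfH)$, and we are done. Otherwise $A_{jk}=1$ and $\rg'\equiv 0$ on an open neighborhood of the point $\theta_j-\theta_k\in\bbT$, so $\rg$ is locally constant at $\theta_j-\theta_k$, i.e.\ $\theta_j-\theta_k\in\CZ(\rg)$. Here is the only nontrivial input: by the standing hypothesis that $\rg$ has \emph{simple} dead zones, $\CZ(\rg)=\DZ(\rg)$, hence $\theta_j-\theta_k\in\DZ(\rg)$ and $(j,k)\notin E(\cG_{\rg,\bfA}(\theta))=E(\bfH)$.

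The only potential obstacle is the reverse implication, and it is avoided precisely by the simple-dead-zone assumption: without it, $\rg$ could be locally constant with a nonzero value, which would also give $\partial_{\theta_j}F_k\equiv 0$ locally without putting $\theta_j-\theta_k$ into a dead zone. With that hypothesis in hand the proof is a couple of lines of definition-chasing, and no further analytic work is needed.
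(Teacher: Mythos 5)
Your proof is correct and follows essentially the same route as the paper's: compute $(\partial_{\theta_j}F_k)(\theta)=A_{jk}\,\rg'(\theta_j-\theta_k)$ on a neighborhood contained in $\Theta_{\rg,\bfA}(\bfH)$ and read off both implications from the definition of the effective coupling graph. If anything, your write-up is slightly more careful than the paper's, since in the reverse direction you make explicit the use of the standing simple-dead-zone hypothesis $\CZ(\rg)=\DZ(\rg)$ (locally constant forces locally null), a step the paper compresses into ``by definition''.
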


\begin{proof}
Write $\theta=(\theta_1, \dotsc, \theta_N)\in\inte(\Theta_{\rg,\bfA}(\bfH))$.
First, suppose that $(j,k)\not\in E(\bfH)$. Since~$\theta\in\inte(\Theta_{\rg,\bfA}(\bfH))$ there exists a neighborhood $U\subset\inte(\Theta_{\rg,\bfA}(\bfH))$ of~$\theta$. Now $(\partial_{\theta_j} F_k)(\theta) = A_{kj}g'(\theta_j-\theta_k)=0$ for all $\theta=(\theta_1, \dotsc, \theta_N)\in U$ since either $A_{kj}=0$ or $\theta_j-\theta_k\in\DZ(\rg)$. 
Conversely, suppose that there exists an~$U$ such that $\partial_{\theta_j} F_k = 0$ on~$U$. But $0=\partial_{\theta_j} F_k = A_{kj}g'(\theta_j-\theta_k)$ on~$U$ which implies $A_{kj}=0$ or $g'(\theta_j-\theta_k)=0$ (on~$U$). In either case we have $(j,k)\not\in E(\bfH)$ by definition.
\end{proof}

Let $\bfG=(V, E)$ be a graph. A partition $\sset{v^1,v^2}$ of~$V$ is a \emph{graph cut}. Write $E_{v^p\to v^q}(\bfG) = \set{(j,k)\in E(\bfG)}{j\in v^p, k\in v^q}$ for the edges from vertices in~$v^p$ to vertices in~$v^q$. The \emph{cut-set} of $\sset{v^1,v^2}$ is $E_{v^1\to v^2}(\bfG)\cup E_{v^2\to v^1}(\bfG)$. The graph cut is \emph{directed $v^p\to v^q$} if $E_{v^p\to v^q}(\bfG)\neq\emptyset$, $E_{v^p\to v^q}(\bfG)=\emptyset$. The partition is \emph{disconnected} if $E_{v^p\to v^q}(\bfG)=E_{v^p\to v^q}(\bfG)=\emptyset$. The following result relates properties of the effective coupling graph at a given point~$\theta$ with the local properties of the dynamical system~\eqref{eq:PhaseOscCoup}.

\begin{prop}\label{prop:LocalProdStruct}
Consider a generic point~$\theta\in\bbT^N$.
\begin{enumerate}
\item There is a directed graph cut $v^1\to v^2$ for the effective coupling graph~$\cG_{\rg,\bfA}(\theta)$ iff the system~\eqref{eq:PhaseOscCoup} has a local skew-product structure $v^1\to v^2$ at~$\theta$.
\item The partition $\sset{v^1, v^2}$ of the effective coupling graph~$\cG_{\rg,\bfA}(\theta)$ is disconnected iff the system~\eqref{eq:PhaseOscCoup} has a local product structure $v^1\to v^2$ at~$\theta$.
\end{enumerate}
\end{prop}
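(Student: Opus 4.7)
The plan is to reduce both statements to Lemma~\ref{lem:NoDependence}, which converts the absence of an edge in the effective coupling graph into the vanishing of a single cross-partial derivative of $F$ on a neighborhood. By Lemma~\ref{lem:Generic}, at a generic $\theta$ we have $\theta\in\inte(\Theta_{\rg,\bfA}(\bfH))$ for $\bfH=\cG_{\rg,\bfA}(\theta)$; I would take $U:=\inte(\Theta_{\rg,\bfA}(\bfH))$ as a common open neighborhood on which the effective coupling graph is constantly $\bfH$. On this $U$, Lemma~\ref{lem:NoDependence} provides the dictionary
\[
(j,k)\notin E(\bfH)\iff \partial_{\theta_j}F_k\equiv 0\text{ on }U
\]
for all $j\neq k$. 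Once this is in hand, everything reduces to bookkeeping on which blocks of the Jacobian are forced to vanish.

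For part~(1), I would split into the two directions. Assuming a directed cut $v^1\to v^2$, we have $E_{v^2\to v^1}(\bfH)=\emptyset$, which by the dictionary forces $\partial_{\theta_j}F_k\equiv 0$ on $U$ for every $k\in v^1$, $j\in v^2$. Grouping $F_{(1)}:=(F_k)_{k\in v^1}$ and $F_{(2)}:=(F_k)_{k\in v^2}$, this says precisely that $F_{(1)}$ depends only on $\theta_{v^1}$ on $U$, i.e., the local skew-product structure $v^1\to v^2$ holds. Conversely, a local skew-product structure on some neighborhood $U'$ of $\theta$ forces $\partial_{\theta_j}F_k\equiv 0$ on $U\cap U'$ for all $k\in v^1$, $j\in v^2$; applying the converse half of Lemma~\ref{lem:NoDependence} returns $(j,k)\notin E(\bfH)$ for each such pair, hence $E_{v^2\to v^1}(\bfH)=\emptyset$.

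Part~(2) is then obtained by applying the same dictionary symmetrically to both halves of the cut-set. A disconnected partition means $E_{v^1\to v^2}(\bfH)=E_{v^2\to v^1}(\bfH)=\emptyset$, which is equivalent to $\partial_{\theta_j}F_k\equiv 0$ on $U$ for every pair with $j$ and $k$ in different parts. Grouping components of $F$ shows that, on $U$, $F$ splits as $\bigl(F_{(1)}(\theta_{v^1}),F_{(2)}(\theta_{v^2})\bigr)$, the local product structure; the converse is again Lemma~\ref{lem:NoDependence} applied in both directions.

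The main subtlety I anticipate is the precise reading of "directed" in part~(1): the definition of a directed cut also asks $E_{v^1\to v^2}(\bfH)\neq\emptyset$, whereas the form of the local skew-product structure only rules out backward dependence of $F_{(1)}$ on $\theta_{v^2}$. To obtain a clean biconditional at a generic $\theta$, I would use the simple-dead-zone assumption $\CZ(\rg)=\DZ(\rg)$: if $(j,k)\in E(\bfH)$ then $\theta_j-\theta_k$ lies in a live zone, and generically $g'(\theta_j-\theta_k)\neq 0$ there, so the corresponding partial $\partial_{\theta_j}F_k$ really does not vanish. Consequently the three mutually exclusive cases (cut directed one way, directed the other, disconnected) match up with skew-product one way, skew-product the other, and product, and the biconditionals in~(1) and~(2) together cover every partition $\sset{v^1,v^2}$.
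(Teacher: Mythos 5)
Your proof is correct and is essentially the paper's own argument: after Lemma~\ref{lem:Generic} places a generic $\theta$ in $\inte(\Theta_{\rg,\bfA}(\bfH))$ with $\bfH=\cG_{\rg,\bfA}(\theta)$, both parts reduce to the edge-by-edge dictionary of Lemma~\ref{lem:NoDependence} applied to $E_{v^2\to v^1}(\bfH)$ and $E_{v^1\to v^2}(\bfH)$, which is exactly what the paper does. One minor remark on your last paragraph: the pointwise claim that generically $g'(\theta_j-\theta_k)\neq 0$ is neither supplied by the paper's genericity notion (which only keeps phase differences off $\partial\DZ(\rg)$) nor needed---for $(j,k)\in E(\bfH)$ the forward direction of Lemma~\ref{lem:NoDependence} (which rests on $\CZ(\rg)=\DZ(\rg)$) already shows $\partial_{\theta_j}F_k$ cannot vanish on any neighborhood of $\theta$, and this neighborhood-level nondegeneracy is exactly the ``genuine dependence'' needed to reconcile the nonemptiness requirement $E_{v^1\to v^2}(\bfH)\neq\emptyset$ in the definition of a directed cut, a point the paper's own proof passes over silently.
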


\begin{proof}
Write $\bfH = \cG_{\rg,\bfA}(\theta)$. By Lemma~\ref{lem:Generic} we may assume $\theta\in\inte(\bfH)$. Suppose that~$\sset{v^1, v^2}$ is a partition. The assertion now follows from applying Lemma~\ref{lem:NoDependence} for any edge in $E_{v^1\to v^2}(\bfH)$ and $E_{v^2\to v^1}(\bfH)$, respectively.
\end{proof}

Recall that two nodes $v,w$ in a directed graph are weakly connected if there is a path of edges (irrespective of their direction) between them. A weakly connected component is a maximal weakly connected subgraph. The following is a direct consequence of Proposition~\ref{prop:LocalProdStruct}.

\begin{cor}
Consider a generic point~$\theta\in\bbT^N$ and let $\bfH=\cG_\rg(\theta)$. If $v^1, \dotsc, v^\ell$ is the partition of the vertices corresponding to the weakly connected components of~$\bfH$ then there is a neighborhood~$U$ of~$\theta$ such that~\eqref{eq:PhaseOscCoup} can be written in the form~\eqref{eq:odeF} with
\[F(\theta) = \big(F_{v^1}(\theta_{v^1}),\dotsc, F_{v^\ell}(\theta_{v^\ell})\big)\]
on $U\subset\bbT^{v^1}\times\dotsb\times\bbT^{v^\ell}\cong\bbT^N$.
\end{cor}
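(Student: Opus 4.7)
The plan is to view this corollary as the $\ell$-fold generalization of the binary partition statement in Proposition~\ref{prop:LocalProdStruct}; the argument reduces essentially to a single application of Lemma~\ref{lem:NoDependence} followed by integration. First, invoking Lemma~\ref{lem:Generic} I may assume $\theta\in\inte(\Theta_\rg(\bfH))$ and then choose an open product neighborhood $U=\prod_{k=1}^N U_k$ of~$\theta$ contained in $\inte(\Theta_\rg(\bfH))$, where each $U_k\subset\bbT$ is a small open interval around~$\theta_k$.

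The key observation is that whenever $k\in v^q$ and $j\in v^p$ with $p\neq q$, neither $(j,k)$ nor $(k,j)$ can belong to $E(\bfH)$, for otherwise the two weakly connected components $v^p,v^q$ would merge into a single one. Lemma~\ref{lem:NoDependence} then forces $\partial_{\theta_j}F_k\equiv 0$ throughout~$U$ for every such pair. Since $U$ is a product of intervals, this pointwise vanishing integrates to the statement that, on~$U$, the component~$F_k$ depends only on the coordinates indexed by~$v^q$. Collecting the components indexed by $k\in v^q$ defines the desired map $F_{v^q}$, and the natural identification $\bbT^N\cong\bbT^{v^1}\times\cdots\times\bbT^{v^\ell}$ then assembles these into the claimed product decomposition.

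A more modular alternative would be to apply Proposition~\ref{prop:LocalProdStruct}(2) to each of the $\ell$ binary cuts $\sset{v^p,V_N\sm v^p}$, which are disconnected precisely because~$v^p$ is a union of weakly connected components of~$\bfH$, and then intersect the resulting neighborhoods. I do not foresee a substantive obstacle here: the dynamical content is already encapsulated in Lemma~\ref{lem:NoDependence}, and the only mild care required is to work on a product neighborhood so that vanishing partial derivatives genuinely translate into independence from the corresponding coordinates.
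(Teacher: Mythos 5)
Your proposal is correct and is essentially the paper's argument: the paper offers no separate proof, simply declaring the corollary a direct consequence of Proposition~\ref{prop:LocalProdStruct}, which is exactly your ``modular alternative'' of applying part~(2) to each cut $\sset{v^p, V_N\sm v^p}$ and intersecting neighborhoods. Your primary route merely inlines the proof of that proposition (Lemma~\ref{lem:Generic} plus Lemma~\ref{lem:NoDependence}), with the product-neighborhood/integration step making explicit a detail the paper leaves implicit.
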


\begin{rem}
A network of coupled oscillators naturally defines a nontrivial asynchronous network as defined in~\cite{Bick2015}. Events occur when the effective coupling graph changes along a trajectory, which defines the ``event map''~$\mathcal{E}(\theta) = \cG_{\rg,\bfA}(\theta)$. Now the coupled oscillator network~\eqref{eq:PhaseOscCoup} written as~\eqref{eq:odeF} is determined by the state-dependent ``network vector field''~$F(\theta) = F^{\mathcal{E}(\theta)}(\theta)$. Moreover, the network structure is ``additive'' in the sense that the dynamics of each oscillator is determined by a sum of the contributions from other oscillators and the condition whether~$(j,k)\in\cG_{\rg,\bfA}(\theta)$ only depends on~$(\theta_k, \theta_j)$. In the language of~\cite{Bick2015, Bick2017a}, this means that the asynchronous network is ``functionally decomposable'' and ``structurally decomposable''. Finally, note that Proposition~\ref{prop:LocalProdStruct} implies that, generically, we have a local product structure, a condition for the spatiotemporal decomposition of the Factorization of Dynamics Theorem~\cite{Bick2017a}.
\end{rem}

\section{Realizing effective coupling graphs}
\label{sec:Realization}

In this section we aim to relate dead zones and effective coupling graphs, noting that the effective coupling graph $\cG_{\rg}(\theta)$ depends both on coupling function $\rg$ and choice of $\theta$. Recall that {\bf Q0} and {\bf Q1} in Section~\ref{sec:DeadzonesIntro} concern the set of effective coupling graphs that can be realised and
the goal of this section is to tackle such \emph{structural} problems. Recall that if $\Theta_{\rg,\bfA}(\bfH)\neq\emptyset$ then we say that the effective coupling graph~$\bfH$ is \emph{realised} for~$\rg$. Proposition~\ref{prop:gentheta} answers {\bf Q0} in the positive: for typical choice of $\theta$ (with trivial isotropy), all effective coupling graphs can be realised.
We consider two special cases of {\bf Q1}:
\begin{itemize}
\item[\bf Q1a] Given a point $\theta\in\bbT^N$, a structural coupling graph $\bfA$ and a graph~$\bfH\in\cH(\bfA)$, is there a coupling function~$\rg$ 
{for \eqref{eq:PhaseOscCoup} (resp. \eqref{eq:PhaseOscSN}})
such that $\mathcal{G}_{\rg,\bfA}(\theta)=\bfH$?
{(resp. $\mathcal{G}_{\rg}(\theta)=\bfH$)}? 
\item[\bf Q1b] Is there a coupling function that realises all possible effective coupling graphs? Specifically, for \eqref{eq:PhaseOscSN}, is there a coupling function~$\rg$ such that\footnote{Writing the image of a subset $A$ of the range of a function $F$ as $F(A):=\set{F(a)}{a\in A}$.} $\cG_{\rg}(\bbT^N)=\cH_N$?
\end{itemize}
Proposition~\ref{prop:patho} gives some constraints on answers of {\bf Q1a}. We do not have a complete answer to {\bf Q1a}, while Corollary~\ref{cor:allgraphs} answers {\bf Q1b} positively.  We also consider what possible effective coupling graphs will be realised for a coupling function~$\rg$: this is important if we wish to understand the dynamics of~\eqref{eq:PhaseOscSN} with a fixed coupling function. To some extent this is simply a computational question: For any~$\theta = (\theta_1, \dotsc, \theta_N)$ one has to determine which phase differences $\theta_j-\theta_k$ lie in a dead zone. 
We give some general results in this direction in Section~\ref{sec:couplingongraph}. We believe that typical coupling functions will not be able to realise more than a small subset of effective coupling graphs. Henceforth we mainly restrict to discussion of the fully connected network~\eqref{eq:PhaseOscSN} although several of the results easily generalize to~\eqref{eq:PhaseOscCoup}.

\subsection{Restrictions on the effective coupling graph imposed by $\theta$}

Here we tackle the questions $\bf  Q1a, \bf Q1b$ above by putting the emphasis on the point~$\theta$. Specifically, given~$\theta\in\bbT^N$, what do the properties of~$\theta$ impose on the effective coupling graphs of \eqref{eq:PhaseOscSN}? The isotropy of~$\theta\in\bbT^N$ has some important consequences on the possible effective coupling graphs realised at~$\theta$:

\begin{prop}\label{prop:patho}
Consider the all-to-all coupled oscillator network~\eqref{eq:PhaseOscSN} with coupling function~$\rg$. Let $\theta\in\cC\subset \bbT^N$ be fixed.
\begin{enumerate}[(i)]
\item
If~$\theta$ has isotropy~$\Sigma_{\theta}$ then~$\cG_{\rg}(\theta)$ must have at least the same isotropy.
\item
For full synchrony $\Sync = (a, \dotsc, a)$ we have $\cG_\rg(\Sync)\in\sset{\egraph_N, \bfK_N}$.
\item
Suppose there exists $0<a<2\pi/N$ such that $\theta_{k+1}-\theta_k=a$ for any $k\in \sset{1,\dotsc,N-1}$. Then one of the following cases occurs:
\begin{enumerate}[(1)]
\item The directed path $\mathbf{P}_{N,N-1,\dotsc,1}$
is a subgraph of $\cG_{\rg}(\theta)$ but $\mathbf{P}_{1,2,\dotsc,N}$ is not.
\item The directed path $\mathbf{P}_{1,2,\dotsc,N}$ is a subgraph of $\cG_{\rg}(\theta)$ but $\mathbf{P}_{N,N-1,\dotsc,1}$ is not.
\item The undirected path $\pugraph_{1,2,\dotsc,N}$ is a subgraph of $\cG_{\rg}(\theta)$.
\item $\cG_{\rg}(\theta)$ is a $n$-partite graph (with $n=[N/2]$ if $N$ is even or $n=[N/2]+1$ if not).
\end{enumerate}
\end{enumerate}
\end{prop}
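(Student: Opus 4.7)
For part (i), I would simply cite Corollary~\ref{cor:sigma} applied to $\bfA = \bfK_N$, which gives $\Sigma_\theta \subset \Sigma_{\cG_\rg(\theta)}$ directly. For part (ii), I would note that at $\Sync$ every phase difference $\theta_j - \theta_k$ equals $0$, so whether the edge $(j,k)$ lies in $\cG_\rg(\Sync)$ depends only on whether $0 \in \DZ(\rg)$; this uniformity across $(j,k)$ forces $\cG_\rg(\Sync)$ to be either empty (so equal to $\egraph_N$) or complete (so equal to $\bfK_N$).

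The substantive part is (iii). The key observation is that under the equal-spacing hypothesis one has $\theta_j - \theta_k = (j-k)a$, so \emph{the nearest-neighbor phase differences take only the two values $\pm a$}, uniformly in $k$. In particular, the edge $(k,k+1)$ is present in $\cG_\rg(\theta)$ for every $k \in \sset{1,\dotsc,N-1}$ if and only if $-a \notin \DZ(\rg)$, and $(k+1,k)$ is present for every such $k$ if and only if $a \notin \DZ(\rg)$. I would then split into the four mutually exclusive and exhaustive subcases given by whether $a$ and $-a$ lie in $\DZ(\rg)$. Three of the four subcases match cases~(1)--(3) of the proposition: if neither lies in $\DZ(\rg)$ then both orientations of nearest-neighbor edges are present and $\pugraph_{1,\dotsc,N} \subset \cG_\rg(\theta)$, giving case~(3); if exactly $a \in \DZ(\rg)$ (resp.\ $-a \in \DZ(\rg)$) then all edges $(k,k+1)$ (resp.\ $(k+1,k)$) survive while the opposite orientation is uniformly absent, giving case~(2) (resp.\ case~(1)) since the obstructed path cannot occur as a subgraph.

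The only delicate subcase is $\sset{a,-a} \subset \DZ(\rg)$, which I expect to match case~(4). Here no nearest-neighbor edge is present in either direction, so I would exhibit explicitly the partition of $\sset{1,\dotsc,N}$ into consecutive pairs $\sset{1,2}, \sset{3,4}, \dotsc$, adjoining the singleton $\sset{N}$ when $N$ is odd; the only candidate edges inside any part are nearest-neighbor edges, which are absent, so each part is an independent set. The number of parts is $N/2$ when $N$ is even and $(N-1)/2+1$ when $N$ is odd, matching the stated $n$. The main point I expect to require care is verifying that the four subcases are genuinely exhaustive and that the $n$-partite conclusion does not need additional control on whether $\pm 2a, \pm 3a, \dotsc$ lie in $\DZ(\rg)$ --- the nearest-neighbor information alone already certifies the independence of each part, so no further assumption on the coupling function is needed.
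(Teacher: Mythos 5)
Your proposal is correct and follows essentially the same route as the paper: (i) via Corollary~\ref{cor:sigma}, (ii) from the fact that all phase differences at~$\Sync$ coincide (the paper phrases this via~(i), which is a negligible difference), and (iii) via the exhaustive four-case split on whether $a$ and $-a=2\pi-a$ lie in $\DZ(\rg)$, with the same partition into consecutive pairs $\sset{1,2},\sset{3,4},\dotsc$ (plus a singleton for odd~$N$) establishing the $n$-partite case. Your remark that independence of each part needs only the nearest-neighbour information, with no control on $\pm 2a, \pm 3a,\dotsc$, is exactly the point implicit in the paper's argument.
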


\begin{proof}
(i)~Is an application of Corollary~\ref{cor:sigma}, notably $\theta$ with nontrivial isotropy will limit the possible networks to those that have at least the same isotropy.

(ii)~The claim follows from~(i): the effective coupling graph must have full symmetry and hence be either~$\bfK_N$ or~$\egraph_N$. 

(iii)~We have one of the following cases as (we illustrate case~(4) in Figure~\ref{Figpatho2}):
(1)~$a\in\LZ(\rg), 2\pi-a\in\DZ(\rg)$,
(2)~$a\in\DZ(\rg), 2\pi-a\in\LZ(\rg)$,
(3)~$a, 2\pi-a\in\LZ(\rg)$,
(4)~$a, 2\pi-a\in\DZ(\rg)$.
\begin{figure}
\centering
\includegraphics[width=8cm]{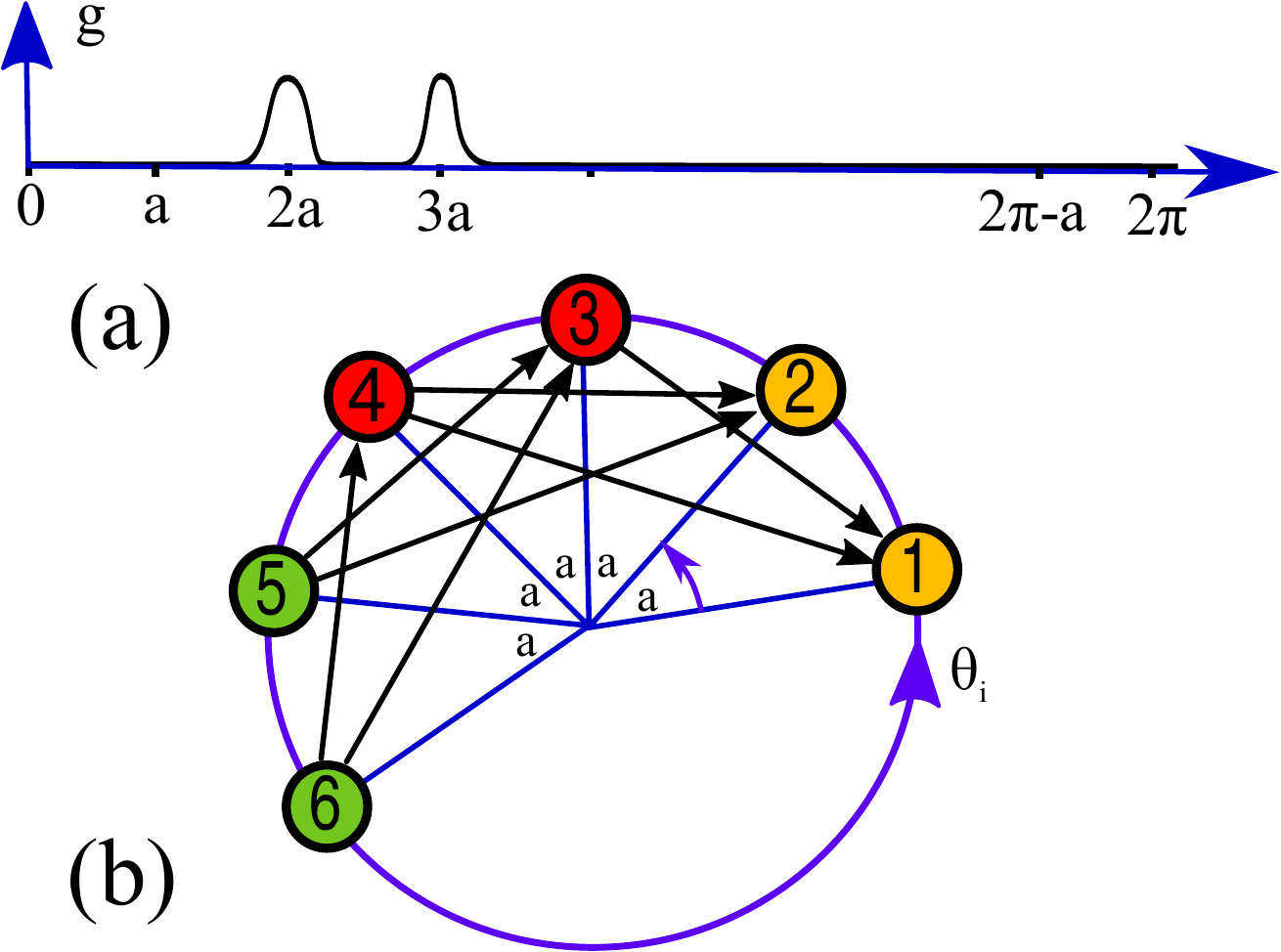}
\caption{
Illustration of the case (4) in Proposition~\ref{prop:patho}(iii) for $N=6$ oscillators: the coupling function~$\rg$ shown in Panel~(a) has two live zones centered at~$2a$ and~$3a$, the remainder consists of two dead zones. The diagram in Panel~(b) shows the phases~$\theta_k$ at one instant in time such that $\theta_j-\theta_i=a(j-i)$ for all $j>i$.
The effective coupling graph for the coupling function in~(a) is indicated by black arrows between the phases corresponding to individual nodes. This coupling graph is tripartite as indicated by the node colouring.}
\label{Figpatho2}
\end{figure}
Since all the differences $\theta_{i+1}-\theta_i$ are equal to~$a$ (and thus all $\theta_k-\theta_{k+1}$ equal to $2\pi-a$), then in case~(1) we have that $\bfP_{N,N-1,\dotsc,1}$ is a subgraph of~$\cG_{\rg}(\theta)$ but not~$\bfP_{1,2,\dotsc,N}$. Similarly for the cases~(2) and~(3).
In the case~(4) the vertices of~$\cG_{\rg}(\theta)$ can be partitioned into $n=[N/2]$ independent sets~$A_n$ if~$N$ is even (or into $n=[N/2]+1$ such sets if~$N$ is odd): 
namely the successive sets $A_1=\sset{1,2}$, $A_2=\sset{3,4}$, etc. This means that~$\cG_{\rg}(\theta)$ is a $n$-partite graph.
\end{proof}

While Proposition~\ref{prop:patho}(iii) limits to the splay configuration~$\Splay$ in a special case, the next Proposition gives a characterisation of effective coupling graphs that are realised for~$\Splay$.

\begin{prop}\label{prop:splay}
Consider the coupled oscillator network~\eqref{eq:PhaseOscSN} with coupling function~$\rg$.
\begin{enumerate}[(i)]
\item If $2\pi/N\in\LZ(\rg)$ then the directed cycle $\mathbf{C}_{1,2,\dotsc,N}$ is a subgraph of $\mathcal{G}_{\rg}(\Splay)$.
\label{prop:SplayOne}
\item Let $1<n<N$ and suppose that $2n\pi/N\in\LZ(\rg)$. Take indices modulo~$N$.
\label{prop:SplayTwo}
\begin{enumerate}
\item If $N=nm$ then for any $r\in\sset{1, \dotsc, m}$ the directed cycle $\bfC_{r,r+n,\dotsc,r+(m-1)n}$ is a subgraph of $\mathcal{G}_{\rg}(\Splay)$.
\item If $n$ does not divide~$N$, then the directed cycle $\bfC_{1,1+n,1+2n,\dotsc,1+N-n}$ is a subgraph of~$\mathcal{G}_{\rg}(\Splay)$.
\end{enumerate}
\end{enumerate}
\end{prop}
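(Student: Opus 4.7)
The plan is to exploit the translational symmetry of the splay state: at $\Splay=(\phi,\phi+2\pi/N,\dotsc,\phi+(N-1)2\pi/N)$ one has $\theta_j-\theta_k\equiv(j-k)\cdot 2\pi/N\pmod{2\pi}$, so whether an edge $(j,k)$ lies in $\cG_\rg(\Splay)$ depends only on $(j-k)\bmod N$. Each cycle appearing in the proposition is obtained by iterating a constant step in $V_N$ (step $1$ in~(i), step $n$ in~(ii)), so all of its directed edges carry exactly one value of phase difference, and the whole proof reduces to checking that this single value lies in $\LZ(\rg)$.

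For part~(i) I would note that every edge $(k,k+1)$ of $\bfC_{1,2,\dotsc,N}$, and also the wrap-around edge $(N,1)$, carries the phase difference $-2\pi/N\equiv(N-1)\cdot 2\pi/N\pmod{2\pi}$, i.e.\ the same point in $\bbT$ that is reached by traversing $2\pi/N$ in the opposite direction; the hypothesis $2\pi/N\in\LZ(\rg)$ therefore places each such edge in $\cG_\rg(\Splay)$. For part~(ii)(a) with $N=nm$, the map $x\mapsto x+n\pmod N$ partitions $V_N$ into $n$ disjoint orbits of length $m$, namely $\sset{r,r+n,\dotsc,r+(m-1)n}$ for $r=1,\dotsc,n$; each edge of $\bfC_{r,r+n,\dotsc,r+(m-1)n}$, including its wrap-around edge, carries the same phase difference $2n\pi/N$ (up to sign), so the assumption $2n\pi/N\in\LZ(\rg)$ puts every such edge in the effective coupling graph.

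Part~(ii)(b) follows from the same principle with one extra computation: when $n\nmid N$, set $d=\gcd(n,N)$ and observe that the orbit of $1$ under $x\mapsto x+n\pmod N$ has length $N/d$, with penultimate vertex $1+(N/d-1)n\equiv 1-n\equiv 1+N-n\pmod N$, matching the list in the statement. Since each edge of this cycle again corresponds to the constant phase difference $2n\pi/N$, the hypothesis forces every edge to be realised in $\cG_\rg(\Splay)$. The only real obstacle is bookkeeping: $\LZ(\rg)$ is not assumed symmetric, so $\alpha\in\LZ(\rg)$ does not a priori imply $-\alpha\in\LZ(\rg)$, and one must pin down the direction convention so that for each directed edge the correct representative ($2n\pi/N$ or $-2n\pi/N$) is tested for membership. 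Once this sign convention is fixed, all three parts collapse to the single observation that a cycle generated by a constant step in $\bbZ/N\bbZ$ carries a constant phase difference at $\Splay$.
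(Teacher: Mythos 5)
Your proposal is correct in substance and takes essentially the same route as the paper: at $\Splay$ all phase differences along a constant-step cycle coincide, so each part reduces to testing a single value against $\LZ(\rg)$, and your $\gcd$ computation in (ii)(b) (orbit of $1$ under $x\mapsto x+n$ has length $N/\gcd(n,N)$ and ends at $1+N-n$) is in fact more careful than the paper's proof, which asserts the orbit closes only for $p\in N\Z$ (true only when $\gcd(n,N)=1$). The sign issue you defer as ``bookkeeping'' is genuine but originates in the paper, not in your argument: with the definition $E(\cG_\rg(\theta))=\set{(j,k)}{\theta_j-\theta_k\notin\DZ(\rg)}$ taken literally, every edge $(k,k+1)$ of $\bfC_{1,2,\dotsc,N}$ (and the wrap-around edge $(N,1)$) carries $\theta_k-\theta_{k+1}\equiv-2\pi/N$, exactly as you compute, so the hypothesis $2\pi/N\in\LZ(\rg)$ actually guarantees the reversed cycle $\bfC_{N,N-1,\dotsc,1}$; the paper's own proof checks $\theta_{k+1}-\theta_k$ for the edge $(k,k+1)$, i.e.\ it silently uses the opposite orientation convention, and the same remark applies to part (ii). Since $\LZ(\rg)$ is not assumed symmetric this is not vacuous, but it only amounts to reversing the orientation of the cycles in the statement (or, equivalently, fixing one convention once and for all), so your inference in part (i) should be stated as yielding the cycle with the appropriate orientation rather than asserting, as written, that $2\pi/N\in\LZ(\rg)$ covers edges carrying $-2\pi/N$.
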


\begin{proof}
\eqref{prop:SplayOne} As in the proof of Proposition~\ref{prop:patho}(iii) we consider successive phase differences for~$\Splay = (\theta_1, \dotsc, \theta_N)$. We have $\theta_{k+1}-\theta_k = \theta_1-\theta_N = 2\pi/N$. Since $2\pi/N\in\LZ(\rg)$ by assumption, we have $(k, k+1)\in E(\mathcal{G}_{\rg}(\Splay))$ which proves that $\mathbf{C}_{1,2,\dotsc,N}\subset\mathcal{G}_{\rg}(\Splay)$.

\eqref{prop:SplayTwo} A similar argument proves the second assertion. Let $1<n<N$ and since $\theta_k-\theta_{k+n}\in\LZ(\rg)$ we have $(k,k+n)\in E(\mathcal{G}_{\rg}(\Splay))$. Now suppose that $N=nm+q$ with $0\leq q<m$. If $q=0$ we have $r+pn = r \mod N$ for $p=m<N$ which proves case~(a). If $q\neq 0$ then $r+pn = r \mod N$ only if $p\in N\Z$ which corresponds to case~(b).
\end{proof}

Recall that~$\Splay$ is the only fixed point of the residual action of~$\Z_N$ on the CIR~\cite{Ashwin1992}. The graph~$\mathcal{G}_{\rg}(\Splay)$ is invariant under the action of the symmetry by Proposition~\ref{prop:patho}(i). In cases~\eqref{prop:SplayOne} and~(\ref{prop:SplayTwo}b) of Proposition~\ref{prop:splay}, $\mathcal{G}_{\rg}(\Splay)$ contains one cycle involving all vertices, that is mapped to itself. By contrast, in case (\ref{prop:SplayTwo}a) of Proposition~\ref{prop:splay} there are~$m$ disjoint cycles that are permuted by the symmetry. The existence of cycles has also some immediate consequences for the connectedness of $\mathcal{G}_{\rg}(\Splay)$.

\begin{cor}
Consider system~\eqref{eq:PhaseOscSN} with coupling function~$\rg$ and suppose that the number of oscillators~$N$ is prime. Then either $\mathcal{G}_{\rg}(\Splay)=\egraph_N$ or $\mathcal{G}_{\rg}(\Splay)$ contains a directed cycle involving all~$N$ vertices, and so $\mathcal{G}_{\rg}(\Splay)$ is strongly connected.
\end{cor}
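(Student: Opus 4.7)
The plan is to reduce the corollary to the cases already covered by Proposition~\ref{prop:splay}, exploiting primality of $N$ to ensure that every nonzero $n \in \{1,\dots,N-1\}$ is coprime to $N$.

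First I would observe that at the splay state $\Splay$, for any two indices $j < k$ the phase difference is $\theta_j - \theta_k = -2\pi(k-j)/N \bmod 2\pi$, so every off-diagonal phase difference arising in $\cG_{\rg}(\Splay)$ has the form $2\pi n/N \bmod 2\pi$ for some $n \in \{1,\dots,N-1\}$. Consequently, the set of $n$ with $2\pi n/N \in \LZ(\rg)$ entirely determines which edges appear in $\cG_{\rg}(\Splay)$. I would then split into two cases.

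In the first case, suppose that $2\pi n/N \in \DZ(\rg)$ for every $n \in \{1,\dots,N-1\}$. Then no edge can be present at $\Splay$, so $\cG_{\rg}(\Splay) = \egraph_N$, giving the first alternative. In the second case, there exists some $n \in \{1,\dots,N-1\}$ with $2\pi n/N \in \LZ(\rg)$. If $n = 1$, Proposition~\ref{prop:splay}(i) directly yields that $\bfC_{1,2,\dots,N}$ is a subgraph of $\cG_{\rg}(\Splay)$, a directed cycle through all $N$ vertices. If instead $1 < n < N$, then since $N$ is prime we have $\gcd(n,N)=1$; in particular $n$ does not divide $N$, so Proposition~\ref{prop:splay}(\ref{prop:SplayTwo}b) applies and gives that $\bfC_{1,1+n,1+2n,\dots,1+(N-1)n}$ (indices mod $N$) is a subgraph of $\cG_{\rg}(\Splay)$. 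Coprimality of $n$ and $N$ ensures the residues $1, 1+n, 1+2n, \dots$ run through all of $\{1,\dots,N\}$ before returning to~$1$, so this cycle has length $N$ and involves every vertex.

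Finally, to conclude strong connectedness in the second alternative, I would note that any directed Hamiltonian cycle yields a directed path between any ordered pair of vertices, so the presence of such a cycle as a subgraph of $\cG_{\rg}(\Splay)$ immediately implies $\cG_{\rg}(\Splay)$ is strongly connected. There is no real obstacle here beyond a clean case split; the only point requiring care is invoking primality to rule out the case $n \mid N$ with $1 < n < N$ so that part~(\ref{prop:SplayTwo}a) of Proposition~\ref{prop:splay} (which would produce several disjoint short cycles rather than a single spanning one) never applies.
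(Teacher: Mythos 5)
Your proposal is correct and follows essentially the same route as the paper: a case split between all multiples $2\pi n/N$ lying in $\DZ(\rg)$ (giving $\egraph_N$) and some $n$ with $2\pi n/N\in\LZ(\rg)$, then invoking Proposition~\ref{prop:splay}(i) for $n=1$ and~(\ref{prop:SplayTwo}b) for $1<n<N$, where primality rules out $n\mid N$. Your added remarks on coprimality forcing the cycle to visit all vertices and on a Hamiltonian cycle implying strong connectedness are just slightly more explicit versions of what the paper leaves implicit.
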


\begin{proof}
If $2n\pi/N\in\DZ(\rg)$ then $\mathcal{G}_{\rg}(\Splay)=\egraph_N$. Now suppose that there is an $1\leq n<N$ such that $2n\pi/N\in\LZ(\rg)$. Since~$N$ is prime, either Proposition~\ref{prop:splay}(i) and~(iib) applies. In either case, $\mathcal{G}_{\rg}(\Splay)$ contains a directed cycle involving all~$N$ vertices.
\end{proof}

The next result gives a positive answer to {\bf Q0}, providing that we avoid ``nongeneric'' choices of~$\theta$.

\begin{prop}\label{prop:gentheta}
For a generic choice of $\theta \in \bbT^N$, and for any subgraph $\bfH\in\cH_N$, there exists a coupling function~$\rg$ 
such that $\mathcal{G}_\rg(\theta)=\bfH$.
\end{prop}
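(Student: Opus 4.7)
The plan is to design $\rg$ so that its dead zones cover exactly the ordered phase differences $\theta_j-\theta_k$ corresponding to edges absent from $\bfH$, while keeping the differences corresponding to edges of $\bfH$ strictly inside live zones. For this I take ``generic'' $\theta$ to mean that the $N(N-1)$ ordered differences $\theta_j-\theta_k\bmod 2\pi$, $j\neq k$, are pairwise distinct in $\bbT$; this fails only on a finite union of codimension-one subtori of $\bbT^N$, so the condition defines an open, dense, full-measure subset. Given such $\theta$ and $\bfH\in\cH_N$, set
\[
L=\set{\theta_j-\theta_k\bmod 2\pi}{(j,k)\in E(\bfH)},\qquad D=\set{\theta_j-\theta_k\bmod 2\pi}{j\neq k,\ (j,k)\notin E(\bfH)}.
\]
By genericity, $L$ and $D$ are disjoint finite subsets of $\bbT$ whose union enumerates every ordered off-diagonal phase difference of~$\theta$.

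I would then build $\rg$ as a finite sum of disjoint smooth bumps, one at each point of $L$. Choose $\epsilon>0$ small enough that the closed intervals $[\ell-\epsilon,\ell+\epsilon]$ for $\ell\in L$ are pairwise disjoint and none of them meets $D$. For each $\ell\in L$ let $b_\ell(\theta)=\phi((\theta-\ell)/\epsilon)$, where $\phi(t)=e^{-1/(1-t^2)}$ for $|t|<1$ and $\phi(t)=0$ otherwise; then $\mathrm{supp}(b_\ell)=[\ell-\epsilon,\ell+\epsilon]$ and $b_\ell$ is real-analytic and strictly positive on the open interval. Define $\rg=\sum_{\ell\in L}b_\ell$, which is smooth and $2\pi$-periodic, strictly positive on $\bigcup_{\ell\in L}(\ell-\epsilon,\ell+\epsilon)$ and identically zero on the complement.

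To finish, I would verify that $\rg$ has simple dead zones and then read off the effective coupling graph at $\theta$. Because the bump supports are disjoint, $\rg$ agrees with a single $b_\ell$ on each bump support; since $b_\ell$ is real-analytic and nonconstant on its open support, it is nowhere locally constant there, and the only locally constant points of $\rg$ are those where $\rg\equiv0$ on a neighbourhood. Hence $\CZ(\rg)=\DZ(\rg)$, a union of finitely many arcs, so $\rg$ is a legitimate coupling function with simple dead zones. By the choice of $\epsilon$, each $\ell\in L$ lies in $\LZ(\rg)$ and each $d\in D$ lies in $\DZ(\rg)$, so $\theta_j-\theta_k\in\LZ(\rg)$ iff $(j,k)\in E(\bfH)$, and consequently $\cG_\rg(\theta)=\bfH$. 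The one subtle point is the simple dead zone condition, which requires the bumps to be nowhere locally constant on their open supports; choosing them real-analytic (and keeping their supports disjoint so that no two bumps sum to a constant on an interval) rules out accidental nonzero plateaus.
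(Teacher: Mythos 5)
Your construction is correct and is essentially the paper's argument: the paper likewise takes $\theta$ generic so that all ordered differences $\theta_j-\theta_k$ are pairwise distinct and then specifies live zones containing exactly those differences corresponding to edges of $\bfH$; you merely make the bump construction and the verification of the simple-dead-zone condition ($\CZ(\rg)=\DZ(\rg)$) explicit. The only caveat is the degenerate case $\bfH=\bes_N$, where your sum over $L$ is empty and yields $\rg\equiv 0$, which the paper excludes as a (constant) coupling function; this is fixed by adding one bump supported away from all the phase differences.
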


\begin{proof}
Generically all the difference terms $\theta_j-\theta_k$ (when $j,k$ are ranging in $\sset{1,\dotsc,N}$) are distinct: therefore we can specify live zones that contain points $\theta_j-\theta_k$ if and only if the edge~$(j,k)$ is contained in~$\bfH$.
\end{proof}

Notice that the proof gives an upper bound on the number of dead and live zones needed to realise a given $\bfH$ as an effective coupling graph
(by choosing a coupling function and a point $\theta$): 
namely the bound given by the number of edges of $\bfH$. This bound is far from being optimal, notably for very ``regular" graphs:
for instance for any $\theta \in \bbT^N$, the graph $\bfK_N$ itself can be realised as a $\mathcal{G}_{\rg}(\theta)$ where $\rg$ has only one live zone and no dead zone. Corollary~\ref{cor:allgraphs} extends the same method of proof to show that one can, in principle, realise all subgraphs using one and the same $\rg$.

\begin{cor}
\label{cor:allgraphs}
There exists a coupling function~$\rg$ 
such that for any subgraph~$\bfH \in \cH_N$ there exists $\theta^0 = \theta^0(\bfH) \in \bbT^N$ such that $\cG_{\rg}(\theta^0)=\bfH$. 
\end{cor}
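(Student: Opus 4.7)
The plan is to iterate the construction of Proposition~\ref{prop:gentheta} uniformly across all $M = 2^{N(N-1)}$ subgraphs in $\cH_N$. Enumerate these subgraphs as $\bfH_1, \dotsc, \bfH_M$. First I would select points $\theta^1, \dotsc, \theta^M \in \bbT^N$ inductively, each in suitably generic position, and then assemble a single coupling function $\rg$ whose live and dead zones simultaneously satisfy $\cG_\rg(\theta^i) = \bfH_i$ for every $i$.

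For the inductive choice, say that $\theta \in \bbT^N$ is \emph{$i$-admissible} if the $N(N-1)$ values $\theta_j - \theta_k$ with $j \neq k$ are pairwise distinct modulo $2\pi$ and disjoint from the finite set $\bigcup_{i' < i}\set{\theta^{i'}_j - \theta^{i'}_k}{j\neq k}$. The complement of the $i$-admissible set is a finite union of codimension-one algebraic subtori of $\bbT^N$, so $i$-admissible points form an open dense subset; pick any $\theta^i$ from it. Set $\Delta := \bigcup_{i=1}^M \set{\theta^i_j - \theta^i_k}{j\neq k} \subset \bbT$, which by construction is a finite set of pairwise distinct points, equipped with a labelling that records for each $d \in \Delta$ the unique triple $(i,j,k)$ from which it arose and whether $(j,k) \in E(\bfH_i)$.

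Next I would choose $\varepsilon > 0$ smaller than half the minimum pairwise distance between points of $\Delta$. For each $i$ and each edge $(j,k) \in E(\bfH_i)$ I attach a live interval of radius $\varepsilon$ centred at $\theta^i_j - \theta^i_k$. Let $\rg \colon \bbT \to \bbR$ be a smooth $2\pi$-periodic function obtained as a sum of smooth bump functions, one supported on each such interval and strictly nonzero on its interior, vanishing identically outside their union. Then $\rg$ has simple dead zones (finitely many components, all locally null), and by the separation built into $\Delta$ together with the choice of $\varepsilon$, for every $i$ and every pair $j \neq k$ we have $\theta^i_j - \theta^i_k \in \LZ(\rg)$ if and only if $(j,k) \in E(\bfH_i)$. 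Consequently $\cG_\rg(\theta^i) = \bfH_i$ for each $i$, giving the claim with $\theta^0(\bfH_i) := \theta^i$.

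The main subtlety lies in ensuring that a live interval introduced to realise some edge of $\bfH_i$ does not simultaneously trap a phase difference $\theta^{i'}_{j'} - \theta^{i'}_{k'}$ which is required to be dead for $\bfH_{i'}$. This is precisely why I enforce mutual disjointness of all phase differences across the whole family $\theta^1, \dotsc, \theta^M$ during the inductive selection, and then pick $\varepsilon$ smaller than half the minimum spacing of $\Delta$: the resulting live intervals are then pairwise disjoint and narrow enough that each contains only its intended difference.
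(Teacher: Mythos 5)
Your proposal is correct and follows essentially the same route as the paper's proof: enumerate all subgraphs in $\cH_N$, choose one point per subgraph so that all relevant phase differences are pairwise distinct, and build a single coupling function whose live zones are small intervals around exactly those differences corresponding to edges. Your version merely makes explicit the genericity argument and the choice of interval width $\varepsilon$ that the paper leaves implicit.
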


\begin{proof}
Enumerate all graphs $\bfH_n$ in~$\cH_N$ and choose a set $\sset{\theta^n = (\theta^n_1, \dotsc, \theta^n_N)\in\bbT^N}$ such that all phase differences $\theta^{n}_j-\theta^{m}_k$ are distinct. Now take a coupling function~$\rg$ such that for any~$n$ we have $\theta^{n}_j-\theta^{n}_k\in\LZ(\rg)$ if and only if $(j,k)$ is in~$E(\bfH_{n})$.
\end{proof}

A similar proviso holds here: such a constructed~$\rg$ will typically have a very large number of dead zones.

\subsection{Coupling functions for an interaction graph}
\label{sec:couplingongraph}

Given a coupling function~$\rg$, which properties of~$\rg$ imply certain effective coupling graphs realised by~$\rg$? On the other hand given $\theta \in \cC$, a structural coupling graph $\bfA$ and~$\bfH\in\cH(\bfA)$, how can one construct a coupling function~$\rg$ such that $\bfH=\mathcal{G}_{\rg,\bfA}(\theta)$? 
Among the different parameters characterising the coupling function $\rg$, the number of dead zones plays a major role in these questions, since it determines the shapes of the resulting effective coupling graphs. We thus make the following definition:

\begin{defn}
Let $n\in \mathbb{N}$. We denote by $\mathcal{F}(n)$ the set of coupling functions having $n$ dead zones\footnote{Note that if there are $n>1$ dead zones there must also be~$n$ live zones, while for $n=1$ there can be~$0$ or~$1$ live zones, and for $n=0$ there is necessarily one live zone.}.
\end{defn}

\begin{prop}
\label{prop:gprops}
Consider system \eqref{eq:PhaseOscSN} with coupling function $\rg$.
\begin{enumerate}[(i)]
\item The coupling function $\rg$ is dead zone symmetric if and only if all effective coupling graphs for $\rg$ are undirected.
\item Assume that $\rg\in\cF(1)$ is dead zone symmetric with $\LZ(\rg)=[-a, a]$. If $a<2\pi/N$, then for any $1\leq k\leq N$ and any sequence $k,\dotsc,k+p$ in $\sset{1,\dotsc,N}$ we have that~$\egraph_N, \bfK_N$ and the embeddings of $\pugraph_{k,\dotsc,k+p}$ and~$\bfK_{k,\dotsc,k+p}$ 
can be realised as effective coupling graphs for~$\rg$.
If $a =2\pi/N$, then $\bfK_N,\pugraph_{1,\dotsc,N},\cugraph_{1,\dotsc,N},$ and 
the embeddings of graphs $\pugraph_{k,\dotsc,k+p}$ and  $\bfK_{k,\dotsc,k+p}$
can be realised as effective coupling graphs for $\rg$.
\item Assume that $\rg\in\cF(1)$ is dead zone symmetric 
with $\LZ(\rg) = [\pi-a, \pi+a]$ and $a \leq 2\pi/N$.
Then $\egraph_N$ and $\bfK_N$ can be realised as effective coupling graphs for $\rg$.
\end{enumerate}
\end{prop}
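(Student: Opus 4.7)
The plan is to prove (i) by definition-chasing and to treat (ii) and (iii) by explicit construction of configurations $\theta \in \bbT^N$ that realise the claimed effective coupling graphs.

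For (i), the edge condition $(j,k) \in E(\cG_{\rg}(\theta))$ iff $\theta_j - \theta_k \notin \DZ(\rg)$ is invariant under swapping $j \leftrightarrow k$ precisely when $\DZ(\rg) = -\DZ(\rg)$, which gives the forward direction immediately. For the converse, given any $\phi \in \bbT$, I would place $\theta \in \bbT^N$ with $\theta_2 - \theta_1 = \phi$ and the remaining coordinates chosen generically (so no other pair of coordinates affects the $(1,2)$ or $(2,1)$ edges), and use undirectedness of $\cG_{\rg}(\theta)$ to conclude $\phi \in \DZ(\rg) \iff -\phi \in \DZ(\rg)$.

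For (ii), each graph is realised by an explicit placement of phases. Cluster all $\theta_k$ in an arc of length less than $a$ to realise $\bfK_N$. Use the splay configuration $\Splay$, whose nonzero pairwise differences are the nonzero multiples of $2\pi/N$, to realise $\egraph_N$ when $a < 2\pi/N$, and to realise $\cugraph_{1,\dotsc,N}$ when $a = 2\pi/N$: in the latter case the consecutive differences $\pm 2\pi/N = \pm a$ lie on the boundary of $\LZ(\rg)$ (hence in $\LZ(\rg)$), while longer multiples of $2\pi/N$ remain strictly outside $[-a,a]$. To realise $\pugraph_{k,\dotsc,k+p}$, place the $p+1$ path vertices on a segment with common spacing $\delta \in (a/2, a)$ so consecutive differences lie in $\LZ(\rg)$ while any longer path difference $2\delta, 3\delta, \dotsc$ lies in $\DZ(\rg)$, then scatter the remaining $N - p - 1$ vertices with all pairwise separations strictly exceeding $a$ from each other and from the path cluster. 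For $\bfK_{k,\dotsc,k+p}$, cluster the path vertices within an arc of length below $a$ instead, and scatter the rest. The bound $a \leq 2\pi/N$ is precisely what ensures these arrangements fit on $\bbT$, since $N$ points each requiring an $a$-sized minimal separation occupy arc length at most $Na \leq 2\pi$.

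For (iii), realising $\egraph_N$ is straightforward: cluster all $\theta_k$ near a common value so every pairwise difference lies near $0$, hence outside $[\pi - a, \pi + a]$ (bounded away from $0$ since $a \leq 2\pi/N < \pi$ for $N \geq 3$). The realisation of $\bfK_N$ is the main obstacle, as it requires every one of the $N(N-1)$ ordered pairwise differences to sit in the narrow arc of length $2a$ around $\pi$. By part~(i) the symmetry of $\rg$ reduces this to the $\binom{N}{2}$ unordered conditions, but a triangle-type constraint---any three phases with pairwise differences near $\pi$ force a fourth difference near $0$---makes the assertion geometrically delicate. I would exploit the $\Z_2$-symmetry of $\LZ(\rg)$ about $\pi$ and try a suitable rotation of the splay phase $\Splay$, verifying in the boundary case $a = 2\pi/N$ that the differences $m \cdot 2\pi/N \bmod 2\pi$ for $m = 1, \dotsc, N-1$ all lie in $[\pi - a, \pi + a]$, or otherwise build a symmetric antipodal placement of the phases that precisely exhausts the constraint $a \leq 2\pi/N$.
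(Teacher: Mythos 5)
Your parts (i) and (ii) are correct and essentially the paper's own argument: the paper also proves (i) directly from the definition and proves (ii) by exhibiting explicit configurations with prescribed consecutive phase differences (successive gaps in $(a,2\pi/N)$ for $\egraph_N$, gaps below $a/(N-1)$ for $\bfK_N$, spacing $a/2+\epsilon$ along the path and $a+\epsilon$ elsewhere for $\pugraph_{k,\dotsc,k+p}$, etc.). Your use of the splay configuration for $\egraph_N$ when $a<2\pi/N$ and for $\cugraph_{1,\dotsc,N}$ when $a=2\pi/N$, and your cluster-plus-scattered-vertices placement with circular separations exceeding $a$, are the same constructions in slightly different clothing; the fitting argument goes through because $\delta<a$ gives slack $p(a-\delta)>0$ while $Na\le 2\pi$.

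The genuine gap is in (iii). You only establish $\egraph_N$; for $\bfK_N$ you list strategies (``try a rotation of $\Splay$ \dots or otherwise build a symmetric antipodal placement'') without carrying any of them out, so nothing is proved. Moreover, the triangle-type obstruction you flag is real and quantifiable: with $\LZ(\rg)=[\pi-a,\pi+a]$, an edge $(j,k)$ is live iff the circular distance between $\theta_j$ and $\theta_k$ is at least $\pi-a$, so $\cG_\rg(\theta)=\bfK_N$ forces each of the $N$ cyclically consecutive gaps to lie in $[\pi-a,\pi+a]$, whence $2\pi\ge N(\pi-a)$, i.e. $a\ge \pi-2\pi/N$. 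Since the minimum pairwise circular distance of $N$ points on $\bbT$ is maximised (value $2\pi/N$) by the equally spaced configuration, this bound is sharp. Combined with the hypothesis $a\le 2\pi/N$ it is satisfiable only for $N\le 4$ (for $N=3$ one needs $a\ge\pi/3$, for $N=4$ exactly $a=\pi/2$), and for $N\ge 5$ no placement of phases realises $\bfK_N$ at all; in particular your proposed check that the splay differences $m\cdot 2\pi/N$ lie in $[\pi-a,\pi+a]$ fails already at $m=1$ for $N\ge5$. So the $\bfK_N$ part of (iii) cannot be completed as stated for general $N$; the paper's own one-line proof (``follows along similar lines as (ii)'') does not address this constraint either, so you have correctly located the delicate point, but your write-up must either restrict to the range where the (rotated) splay works ($N\le4$ with $a$ large enough) or record that the claim fails for $N\ge 5$.
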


\begin{proof}
(i) This item follows directly from the definition of a dead zone symmetric function.

(ii) Suppose that $a<2\pi/N$. Taking $\theta \in \cC$ such that all the successive differences $\theta_{i+1}-\theta_i$ are in $(a,2\pi/N)$, we have that $\mathcal{G}_{\rg}(\theta)=\egraph_N$: indeed any phase difference $\theta_j-\theta_k$, with $j>k$, will belong to the interval $(a,2\pi(N-1)/N)$ and therefore will be in~$\DZ(\rg)$. Similarly, taking~$\theta$ such that all the successive differences $\theta_{k+1}-\theta_k$ are strictly smaller than $a/(N-1)$ we have $\mathcal{G}_{\rg}(\theta)=\bfK_N$.
Now consider $1\leq k,p\leq N$ and a sequence $k,\dotsc,k+p$ in $\sset{1,\dotsc,N}$. Taking $\theta$ such that 
\begin{align*}
&\theta_{k+1}-\theta_k=\theta_{k+2}-\theta_{k+1}=\dotsb=\theta_{k+p}-\theta_{k+p-1}=a/2+\epsilon,\\
&\theta_{2}-\theta_1=\theta_{3}-\theta_{2}=\dotsb=\theta_{k}-\theta_{k-1}=a+\epsilon,\\
&\theta_{k+p+1}-\theta_{k+p}=\theta_{k+p+2}-\theta_{k+p+1}=\dotsb=\theta_{N}-\theta_{N-1}=a+\epsilon,
\end{align*}
where~$\epsilon$ is a sufficiently small positive real number, we have that $\mathcal{G}_{\rg}(\theta)=\pugraph_{k,\dotsc,k+p}$.
Similarly taking a point $\theta \in \cC$ such that
\begin{align*}
&\theta_{k+1}-\theta_k=\theta_{k+2}-\theta_{k+1}=\dotsb=\theta_{k+p}-\theta_{k+p-1}=\epsilon,\\
&\theta_{2}-\theta_1=\theta_{3}-\theta_{2}=\dotsb=\theta_{k}-\theta_{k-1}=a+\epsilon,\\
&\theta_{k+p+1}-\theta_{k+p}=\theta_{k+p+2}-\theta_{k+p+1}=\dotsb=\theta_{N}-\theta_{N-1}=a+\epsilon,
\end{align*}
with $\epsilon$ small enough, we have in this case that $\mathcal{G}_{\rg}(\theta)=\bfK_{k,\dotsc,k+p}$.
If $a=2\pi/N$, then the same reasoning applies.

(iii) This follows along similar lines as~(ii).
\end{proof}

Observe that, for the particular points $\theta=\Sync$ and $\theta=\Splay$ the coupling functions considered in Proposition~\ref{prop:patho}(i) and Proposition~\ref{prop:splay} can be taken in~$\mathcal{F}(0)$ and~$\mathcal{F}(1)$, respectively.
The proof of Proposition~\ref{prop:gentheta} 
constructs functions using many dead zones that might be very small. There are various questions one can pose about optimality. For example, given $\theta\in\cC$, a structural coupling graph $\bfA$ and a graph $\bfH\in\cH(\bfA)$, what is the minimum $k$ such that there is a $\rg\in\cF(k)$ such that $\bfH=\mathcal{G}_{\rg,\bfA}(\theta)$?
The proof of Proposition~\ref{prop:gentheta} gives an upper bound to this question {in the case of System \eqref{eq:PhaseOscSN}}, namely $n\leq 2^{\#E(\bfH)}$ (where $\#E(\bfH)$ is the number of edges of~$\bfH$), but does not give any information on the length of the live zones involved, which can possibly be arbitrarily small, and for which one may need to control the size. Proposition~\ref{prop:delta} below gives a lower bound, as a function of the number of nodes~$N$ (namely $\pi/2^{N-1}$), on the length of live zones $\delta>0$ for which it is possible
to realise any~$\bfH$ as an effective coupling graph (and this thanks to a coupling function of which live zones have length~$\delta$).

\begin{prop}\label{prop:delta}
Let $0<a<\pi/2^{N-1}$ and let $0<\delta< a$. Then, for any~$\theta\in\bbT^N$ with
\begin{align*}
\theta_{i+1}-\theta_i &\geq \theta_i-\theta_1+a, \qquad\text{for } i \in \sset{1,\dotsc, N-1},\\
\theta_N-\theta_1 &<\pi-a,
\end{align*}
and for any $\bfH\in \cH_N$, there exists and integer $n\leq \#E(\bfH)$ and a coupling function $\rg \in \mathcal{F}(n)$ 
with live zones of length at least~$\delta$ such that $\mathcal{G}_g(\theta)=\bfH$.
\end{prop}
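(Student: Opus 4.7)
The plan is to exploit the hypothesis on $\theta$ to show that the $N(N-1)$ phase differences $\theta_i-\theta_j$ (for $i\neq j$) are pairwise separated by at least $a$ on $\bbT$. Once this separation is in hand, I would build $\rg$ by placing one small live zone around each phase difference $\theta_i-\theta_j$ whose corresponding ordered pair $(i,j)$ is an edge of $\bfH$.

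Set $d_i:=\theta_i-\theta_1$ and $g_k:=\theta_{k+1}-\theta_k$. The first assumption rewrites as the \emph{super-increasing} inequality
\[
g_k \,\geq\, a + \sum_{l<k} g_l \qquad (k=1,\ldots,N-1).
\]
A standard combinatorial lemma follows: for any two distinct nonempty $I,J\subset\sset{1,\ldots,N-1}$,
$\bigl|\sum_{k\in I}g_k-\sum_{k\in J}g_k\bigr|\geq a$. Indeed, taking $m:=\max(I\triangle J)$ with $m\in I$ (say),
\[
\sum_{k\in I}g_k-\sum_{k\in J}g_k \;\geq\; g_m - \sum_{l<m}g_l \;\geq\; a.
\]
For $i<j$, $\theta_j-\theta_i=\sum_{k=i}^{j-1}g_k$ is the sum over the contiguous block $\sset{i,\ldots,j-1}$, and distinct pairs $(i,j)$ yield distinct blocks. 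Hence the $N(N-1)/2$ positive differences are pairwise $a$-separated and all lie in $[a,d_N]\subset[a,\pi-a)$ by the second hypothesis. The reverse differences $\theta_i-\theta_j\equiv 2\pi-(\theta_j-\theta_i)\pmod{2\pi}$ then occupy $(\pi+a,\,2\pi-a]$, are likewise pairwise $a$-separated, and a short computation shows the two clusters are at torus distance $\geq 2a$. Thus the full collection $\sset{\theta_i-\theta_j:i\neq j}$ is pairwise at distance $\geq a$ on $\bbT$.

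Given $\bfH\in\cH_N$, I would then define $\rg$ by fixing, for each $(i,j)\in E(\bfH)$, an interval $L_{ij}\subset\bbT$ of length $\delta$ centred at $\theta_i-\theta_j$, and choosing $\rg$ to be a smooth nonzero bump on each $L_{ij}$ and identically zero outside. Since $\delta<a$ and the centres are $\geq a$ apart, the $L_{ij}$ are pairwise disjoint, which yields exactly $\#E(\bfH)$ live zones of length $\delta$ and the same number of dead zones, so $\rg\in\cF(\#E(\bfH))$. By construction, $(i,j)\in E(\cG_\rg(\theta))$ iff $\theta_i-\theta_j\in\LZ(\rg)$ iff $(i,j)\in E(\bfH)$, so $\cG_\rg(\theta)=\bfH$.

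The main obstacle will be establishing the pairwise separation of all $N(N-1)$ phase differences on the torus; the heart of the matter is the super-increasing lemma, while the circular wrap-around analysis relies crucially on the second hypothesis $d_N<\pi-a$ to keep the positive and negative clusters apart. Everything after that is essentially bookkeeping. One mild edge case to handle separately is $\bfH=\egraph_N$ (no edges), for which one places a single live zone of length $\delta$ disjoint from every $\theta_i-\theta_j$, giving $n=1$ and $\cG_\rg(\theta)=\egraph_N$.
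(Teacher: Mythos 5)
Your proposal is correct and follows essentially the same route as the paper: the hypotheses force all phase differences $\theta_i-\theta_j$ to be pairwise separated by at least $a$ on the torus (the paper checks this by walking through the positive differences in increasing order and then using $\theta_N-\theta_1<\pi-a$ for the reversed ones, you via a super-increasing subset-sum lemma), and one then places disjoint live zones of length $\delta$ centred exactly at the differences corresponding to edges of $\bfH$. The only differences are cosmetic: your lemma packages the separation step more cleanly, and you make explicit the degenerate case $\bfH=\egraph_N$, which the paper's construction leaves implicit.
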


\begin{proof}
Suppose that~$\theta$ satisfies the conditions required and~$\bfH$ is a subgraph of~$\bfG$. The idea is to construct a coupling function for which each live zone is precisely associated to only one edge of $\bfH$ (see Figure~\ref{Fig1}).

To do this we start with the coupling function identically equal to~$0$. Now, if $(2,1)\in E(\bfH)$ then we put a live zone of length $\delta$ centred at $\theta_2-\theta_1$. If not, then we set~$\rg$ to be locally null at $\theta_2-\theta_1$.
Next, we consider $\theta_3-\theta_2 \geq \theta_2-\theta_1+a$: If $(3,2)\in E(\bfH)$ we put a live zone of length~$\delta$ centered at $\theta_3-\theta_2$. This second live zone does not intersect the first one since $\theta_3-\theta_2> \theta_2-\theta_1+\delta$. If $(3,2) \notin E(\bfH)$ we let~$\rg$ be locally null at $\theta_3-\theta_2$. Then we have $\theta_3-\theta_1\geq \theta_3-\theta_2+a$ which satisfies $\theta_3-\theta_1 >\theta_3-\theta_2+\delta$; we can thus put a live zone of length~$\delta$ centered at $\theta_3-\theta_1$ if $(3,1)\in E(\bfH)$.

Repeating the process, we construct~$\rg$ by imposing the existence of a live zone of length~$\delta$ centered at any of the $\theta_i-\theta_k$ (with $1\leq k\leq i\leq N$) such that~$(i,k)$ is in~$E(\bfH)$. Since all these terms are smaller than~$\pi$, 
all the opposite values are determined and separated as well by a distance larger than~$a$. It is therefore possible to add live zones at the $\theta_k-\theta_i$ (with $1\leq k\leq i\leq N$) for which the edge $(k,i)$ is in~$E(\bfH)$.
\end{proof}

\begin{figure}
\centering
\includegraphics[width=12cm]{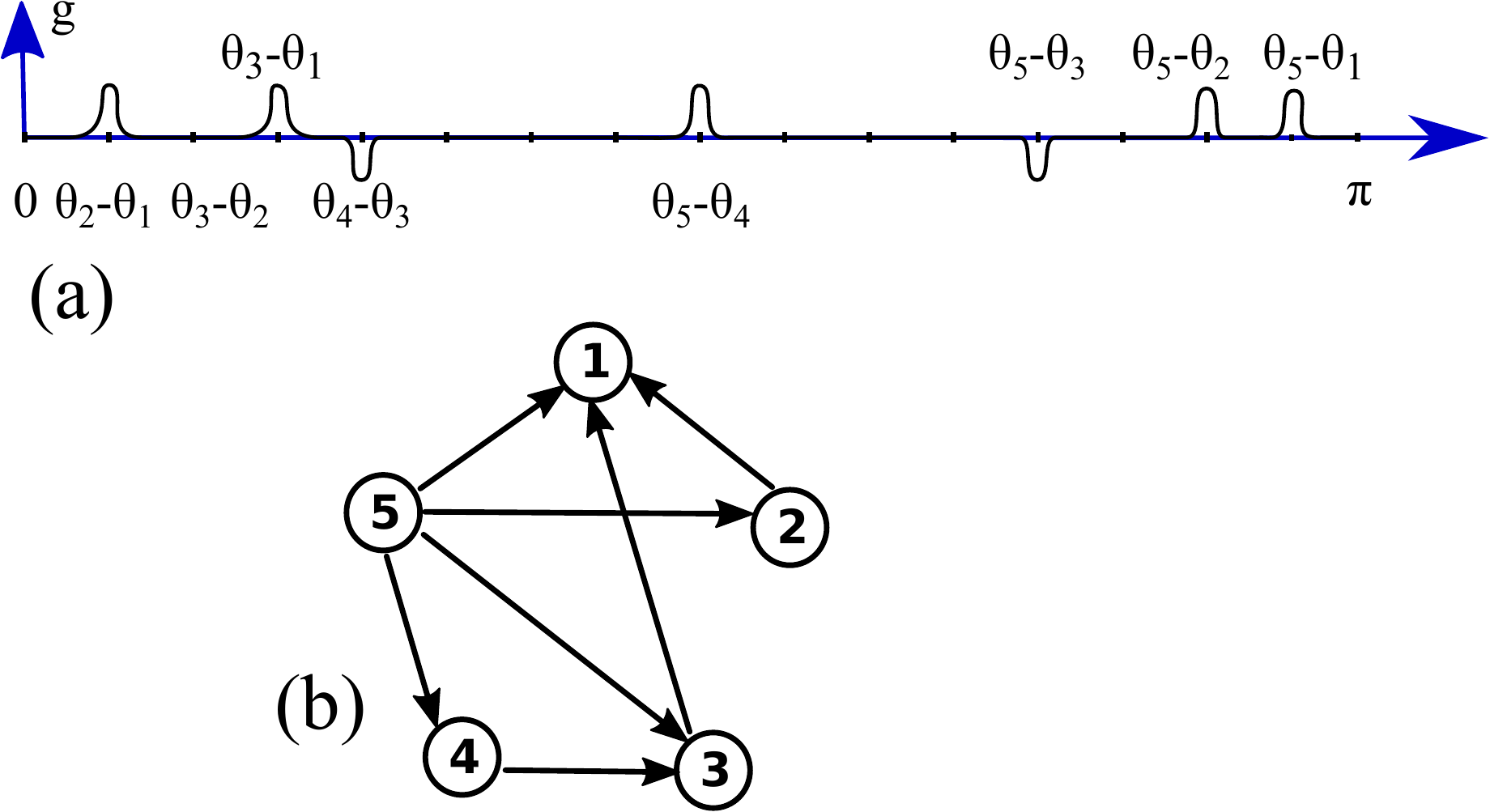}
\caption{
An example of directed graph (b) within $\bfK_5$ with $7$ edges realised as an effective coupling graph with a coupling function~$\rg$ in $\mathcal{F}(7)$ (a) constructed via the method described in the proof of Proposition~\ref{prop:delta}: in the bottom of the graph of $\rg$ we show the successive phase differences $\theta_{i+1}-\theta_i$ for which the edge $(i+1,i)$ is in $E(\bfH)$ (the ones for which $(i+1,i) \notin E(\bfH)$
are not shown), and at which we impose the existence of a live zone of $\rg$. Above the graph of $\rg$, we show the values of the phase differences that are determined by the values of the successive phase differences.}
\label{Fig1}
\end{figure}

\section{Dynamics of effective coupling graphs}
\label{sec:transitions}

The previous section considered the structural problem of understanding the effective coupling graph at some point in phase space. Now let~$\theta(t)=\varphi_t(\theta^o)$ be the solution of the phase oscillator network~\eqref{eq:PhaseOscCoup} with initial condition~$\theta^o$. Clearly, $\cG_{\rg,\bfA}(\varphi_t(\theta^o))$
defines an evolution on the set of effective coupling graphs. In this section, we briefly consider possible dynamics of these effective coupling graphs. 

Suppose that~$\bfH\in\cH_N$ is an effective coupling graph realised for~\eqref{eq:PhaseOscCoup} with coupling function~$\rg$ for some~$\theta$. 

\begin{defn}
The graph~$\bfH$ can be \emph{stably realised} if there is an asymptotically stable invariant open set~$B$ {such that} $B \subset \Theta_{\rg,\bfA}(\bfH)$. {Moreover,} if $B = \Theta_{\rg,\bfA}(\bfH)$, {then we say that the} graph~$\bfH$ is \emph{completely stably realised}.
\end{defn}

In other words, for a stably realised effective coupling graph~$\bfH$, there is {an} open set of~$\theta^o$ such that 
\begin{equation}
{\cG_{\rg,\bfA}(\varphi_t(\theta^o))=\bfH}
\end{equation}
for large enough~$t$. If~$\bfH$ is completely stably realised then this holds for all~{$\theta^o\in\Theta_{\rg,\bfA}(\bfH)$}.

By constructing a coupling function with a stable (relative) equilibrium, we now strengthen Proposition~\ref{prop:gentheta} to show that for any ``sufficiently connected''~$\bfH$ there exists a~$\rg$ such that the effective coupling graph~$\bfH$ can be stably realised. To prove this result, recall the following spectral graph property~\cite{agaev2000,agaev2009}:

\begin{prop}{\bf \cite[Corollary 1]{agaev2009}}\label{prop:agaev2009}
Let~$\bfH$ be a graph admitting a spanning diverging tree. Consider the Laplacian matrix~$L^{\bfH}$ with coefficients \begin{align*}
L^{\bfH}_{jk}=\begin{cases}
-A^{\bfH}_{jk}&\text{if }j \neq k,\\
\sum_{\ell=1,\ell \neq k}^NA^{\bfH}_{\ell k}&\text{if }k=j,
\end{cases}
\end{align*}
where~$A^{\bfH}$ denotes the adjacency matrix of the graph~$\bfH$.
Then the multiplicity of the eigenvalue~$0$ in the spectrum of~$L^{\bfH}$ is one.
\end{prop}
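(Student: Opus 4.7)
The plan is to show that $\mathrm{rank}(L^{\bfH})=N-1$ and, moreover, that $0$ is a simple root of the characteristic polynomial of $L^{\bfH}$. The upper bound on the rank is immediate: each column of $L^{\bfH}$ sums to zero, since the diagonal entry $L^{\bfH}_{kk}=\sum_{\ell\neq k}A^{\bfH}_{\ell k}$ cancels $\sum_{j\neq k}L^{\bfH}_{jk}=-\sum_{j\neq k}A^{\bfH}_{jk}$, so $\mathbf{1}^T L^{\bfH}=0$ and $\mathrm{rank}(L^{\bfH})\leq N-1$; in particular $0$ is already an eigenvalue.

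For the matching lower bound and algebraic simplicity, I would invoke the directed Matrix-Tree Theorem (Tutte's): for any vertex $r'$, the principal cofactor $\det L^{\bfH}_{[r']}$ obtained by deleting row and column $r'$ equals the number of spanning arborescences of $\bfH$ diverging from $r'$. Taking $r'$ to be the root $r$ of the given spanning diverging tree forces $\det L^{\bfH}_{[r]}\geq 1>0$, exhibiting a nonsingular $(N-1)\times(N-1)$ submatrix and hence $\mathrm{rank}(L^{\bfH})\geq N-1$. Moreover, all other principal cofactors are non-negative (they too count spanning arborescences, from other roots), so the coefficient of $\lambda$ in $\det(\lambda I-L^{\bfH})$, which equals $\pm\sum_{r'}\det L^{\bfH}_{[r']}$, is nonzero, certifying that $0$ is a simple root of the characteristic polynomial as well.

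A self-contained alternative for the geometric-multiplicity part proceeds by a maximum-principle argument that exploits the tree structure directly. Unpacking the definition, any left null vector $v$ of $L^{\bfH}$ satisfies $v_k L^{\bfH}_{kk}=\sum_{(j,k)\in E(\bfH)}v_j$ at every vertex $k$. Subtracting the appropriate multiple of $\mathbf{1}$ (which lies in the left kernel by the previous step) reduces to the case $v_r=0$; I claim this forces $v\equiv 0$. Pick $k^*$ attaining $M:=\max_k v_k$. If $M>0$, then $k^*\neq r$ and $L^{\bfH}_{k^*k^*}\geq 1$, because the tree in-edge into $k^*$ lies in $\bfH$; the equation at $k^*$ combined with $v_j\leq M$ must then be an equality, forcing $v_j=M$ for every in-neighbour of $k^*$---in particular for the tree parent of $k^*$. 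Iterating backward along the tree path from $k^*$ to $r$ eventually gives $v_r=M>0$, a contradiction; applying the same to $-v$ then yields $v\equiv 0$.

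The main obstacle is the lower bound. The matrix-tree route reduces it to a classical theorem and, as a bonus, delivers algebraic simplicity of $0$ for free. The direct route only yields the geometric version and is delicate precisely because it is the in-neighbour (not the out-neighbour) relation that propagates maxima backwards along a directed edge; the tree $T$ being \emph{diverging} is exactly what ensures that every non-root vertex carries such a tree in-edge leading back toward the root, which is what the propagation argument consumes.
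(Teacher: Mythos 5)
Your argument is correct, but it is worth noting that the paper does not prove this statement at all: it is imported verbatim as Corollary~1 of Agaev--Chebotarev, so you have supplied a proof where the authors rely on a citation. Your main route is sound with the paper's (column-sum-zero, in-degree) convention for $L^{\bfH}$: the principal minor obtained by deleting row and column $r'$ does count the spanning arborescences of $\bfH$ diverging from $r'$ under exactly this convention, so the cofactor at the root of the assumed spanning diverging tree is at least $1$, all other principal $(N-1)\times(N-1)$ minors are nonnegative, and hence the coefficient of $\lambda$ in $\det(\lambda I - L^{\bfH})$ is nonzero while the constant term vanishes (since $\mathbf{1}^{T}L^{\bfH}=0$). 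This yields \emph{algebraic} simplicity of the eigenvalue $0$, which is in fact the version the paper needs downstream: in Proposition~\ref{prop:realiseattractor} the spectrum is listed with multiplicity and one must rule out $\lambda_j=0$ for $j\geq 2$, so your observation that the maximum-principle alternative only controls the geometric multiplicity is apt --- on its own that weaker argument would not quite suffice for the stability conclusion, though it is a nice self-contained proof that the kernel is spanned by $\mathbf{1}$ (the propagation of the maximum along tree in-edges back to the root is stated correctly, and the diverging-tree hypothesis is used exactly where it must be). Compared with the cited source, whose proof rests on the Agaev--Chebotarev theory of out-forest matrices, your matrix-tree argument is close in spirit but more compact, at the cost of invoking Tutte's theorem as a black box; either way the statement is established, whereas the paper leaves it as an external reference.
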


We use this to prove the following result:

\begin{prop}
\label{prop:realiseattractor}
For any $\bfH\in \cH_N$ admitting a spanning diverging tree, there is a coupling function~$\rg$ such that the oscillator network~\eqref{eq:PhaseOscSN} has a locally asymptotically stable relative equilibrium $(\Omega t +\theta^o_1,\dotsc ,\Omega t +\theta^o_N)$ satisfying $\cG_{\rg}(\theta^o)=\bfH$. In other words, there exists a coupling function~$\rg$ that stably realises~$\bfH$.
\end{prop}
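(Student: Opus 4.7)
The plan is to construct a coupling function $\rg$ and a point $\theta^o$ so that (i) $\cG_{\rg}(\theta^o) = \bfH$, (ii) $\theta^o$ is a relative equilibrium of~\eqref{eq:PhaseOscSN}, and (iii) the linearization, after quotienting by the trivial $\bbT$-direction, is strictly stable.

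For (i) and (ii), I would begin by selecting $\theta^o \in \bbT^N$ generically so that all of the $N(N-1)$ pairwise differences $\theta^o_j - \theta^o_k$ (for $j \neq k$) are distinct modulo~$2\pi$ and nonzero; this is essentially the setup of Proposition~\ref{prop:gentheta}. I then construct~$\rg$ with one small live zone centred at each $\theta^o_j - \theta^o_k$ with $(j,k) \in E(\bfH)$ and dead zones everywhere else. Inside each live zone I shape $\rg$ via a smooth bump-type construction so that $\rg(\theta^o_j - \theta^o_k) = 0$ and $\rg'(\theta^o_j - \theta^o_k) = w_{jk}$ for a preassigned positive weight $w_{jk}$; by genericity each live zone contains exactly one such phase difference, so these prescriptions do not conflict. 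With this choice, every summand in the right-hand side of~\eqref{eq:PhaseOscSN} vanishes at $\theta^o$ (either because $\theta^o_j - \theta^o_k$ lies in a dead zone or at a prescribed zero of~$\rg$), so $\theta^o$ is a relative equilibrium with common frequency $\Omega = \omega$, and by construction $\cG_{\rg}(\theta^o) = \bfH$.

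For (iii), I compute the Jacobian~$J$ of~\eqref{eq:PhaseOscSN} at $\theta^o$. One finds $J_{kl} = \rg'(\theta^o_l - \theta^o_k)$ for $l \neq k$ and $J_{kk} = -\sum_{j \neq k} \rg'(\theta^o_j - \theta^o_k)$. By Lemma~\ref{lem:NoDependence} (or direct inspection), $\rg'(\theta^o_l - \theta^o_k)$ vanishes unless $(l,k) \in E(\bfH)$, in which case it equals $w_{lk} > 0$. Hence $-J$ is precisely the transpose of the weighted Laplacian $L^{\bfH}$ of $\bfH$ with positive weights $w_{jk}$. Since $\bfH$ admits a spanning diverging tree, Proposition~\ref{prop:agaev2009} applied to the weighted variant, together with the classical fact that such Laplacians have all non-zero eigenvalues with strictly positive real part, yields that $-J$ has $0$ as a simple eigenvalue with right eigenvector $(1,\dotsc,1)$, while all remaining eigenvalues lie in the open right half-plane. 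The zero eigenvalue corresponds to the $\bbT$-symmetry of translating all phases uniformly; projecting out this direction (equivalently, passing to phase-difference coordinates) leaves a Jacobian whose spectrum lies strictly in the open left half-plane, giving local asymptotic stability of the relative equilibrium.

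The main technical obstacle is arranging~$\rg$ so that in each live zone it simultaneously attains the prescribed zero value and prescribed positive derivative at the interior point $\theta^o_j - \theta^o_k$, while matching smoothly (together with all derivatives) to the identically zero adjacent dead zones; this is a routine mollifier-type construction but requires some care to ensure $\rg \in C^\infty$. A subtler point is that Proposition~\ref{prop:agaev2009} as cited only controls the multiplicity of the zero eigenvalue, whereas local asymptotic stability additionally demands the remaining spectrum to lie strictly in the open right half-plane for $-J$; this is a standard strengthening for Laplacians of digraphs with a spanning arborescence (and with positive edge weights) but should be invoked explicitly.
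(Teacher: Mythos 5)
Your proposal is correct and takes essentially the same approach as the paper: pick a generic $\theta^o$ with distinct phase differences, build $\rg$ on small live zones with prescribed values and positive slopes so that $\theta^o$ is a relative equilibrium with $\cG_{\rg}(\theta^o)=\bfH$, and get stability from the linearization being (minus) a weighted digraph Laplacian whose zero eigenvalue is simple by Proposition~\ref{prop:agaev2009}. The only cosmetic differences are that you force $\rg(\theta^o_j-\theta^o_k)=0$ on the edges (so $\Omega=\omega$) where the paper merely equalises the sums in its condition~(iii), and you cite the standard location of the nonzero Laplacian eigenvalues where the paper derives it explicitly from the Gershgorin circle theorem.
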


\begin{proof}
Set $\mathbf{1} = (1, \dotsc, 1)\in\bbR^N$ and for this proof write $\Omega t \cdot \mathbf{1}+\theta^o$ the point $(\Omega t +\theta^o_1,\dotsc ,\Omega t +\theta^o_N)$. The proof follows in two steps.

\emph{Step 1.} First, take any $\bfH\in \cH_N$ and choose a generic point~$\theta^o$ such that all the terms $\theta^o_j-\theta^o_k$ (with $j\neq k$) are distinct. In this first step we are going to construct a coupling function $\rg$ with specific conditions written below, for which~$\Omega t \cdot \mathbf{1}+\theta^o$ is a relative equilibrium point of~\eqref{eq:PhaseOscSN}.

Following~\cite{OroMoeAsh2009}, we consider a coupling function~$\rg$ satisfying the conditions
\begin{align*}
\text{(i)}\quad &\cG_{\rg}(\theta^o)=\bfH,\\
\text{(ii)}\quad & \rg'(\theta^o_j-\theta^o_k)>0
\qquad\text{for all }(j,k) \in E(\bfH),\\
\text{(iii)}\quad &\frac{1}{N}\sum_{j=1}^N \rg(\theta^o_j-\theta^o_k)=\frac{1}{N}\sum_{j=1}^N \rg(\theta^o_j-\theta^o_1) \qquad\text{for }k = 1,\dotsc, N.
\end{align*}
Observe that such a coupling function~$\rg$ {satisfying (i),(ii),(iii)} exists, since by choice of~$\theta^o$, we can associate to any term $\theta^o_j-\theta^o_k \in \LZ(\rg)$ (i.e., such that $(j,k)\in E(\bfH)$), 
one and only one value $\rg(\theta^o_j-\theta^o_k)$.

Now, by condition (iii), all the terms
\begin{align*}
\omega+ \frac{1}{N}\sum_{j=1,j\neq k}^N \rg(\theta^o_j-\theta^o_k),\quad 1\leq k\leq N
\end{align*}
are equal, so that we can choose $\Omega$ to be equal to any of these terms. This automatically implies that $\Omega t\cdot\mathbf{1}+\theta^o$ is a relative equilibrium of~\eqref{eq:PhaseOscSN}.

\emph{Step~2.} Second, as in~\cite{OroMoeAsh2009}, we linearize~\eqref{eq:PhaseOscSN} at our equilibrium point and check the conditions (i),(ii),(iii) are sufficient conditions (on~$\rg$) to ensure stability. Set $\chi=\theta-(\Omega t \cdot \mathbf{1}+\theta^o)$. We have
\begin{align*}
\dot\chi_k&=\dot\theta_k-\Omega
=\omega +\frac{1}{N}\sum_{j=1,j\neq k}^N \rg(\chi_j-\chi_k+\theta^o_j-\theta^o_k)-\Omega
\end{align*}
which yields the linearized equation $\dot\chi_k=\frac{1}{N}\sum_{j=1,j\neq k}^N \rg'(\theta^o_j-\theta^o_k)(\chi_j-\chi_k)$.
Therefore, by condition (ii), the linearized equation at $\Omega t \cdot \mathbf{1}+\theta^o$ read
\begin{equation}\label{linear}
\dot\chi_k=\frac{1}{N}\sum_{j=1}^N A_{jk}^{\bfH}\,\rg'(\theta^o_j-\theta^o_k)(\chi_j-\chi_k),\quad 1\leq k\leq N.
\end{equation}
Now set $T_{jk}=A_{jk}^{\bfH}\,\rg'(\theta^o_j-\theta^o_k)$ for $j\neq k$ and $T_{kk}=-\sum_{j=1}^N A_{jk}^{\bfH}\rg'(\theta^o_j-\theta^o_k)=- \sum_{j=1}^N T_{jk}$ to write~\eqref{linear} in matrix form
\[\dot\chi_k=\frac{1}{N}\sum_{j=1}^N T_{jk}\chi_j.\]
The stability conditions are given by the spectrum~$\mathfrak{S}(T)$ of the matrix~$T$. In fact~$-T$ is a Laplacian matrix with spectrum
\begin{align*}
\mathfrak{S}(T)=\sset{0,\lambda_2,\dotsc,\lambda_N},
\end{align*}
where the eigenvalue~$0$ corresponds to the direction $\mathbf{1}=(1,\dotsc,1)$ along the group orbit of the phase-shift symmetry.
This means that stability of $\Omega t+\theta^o$ is only determined by the eigenvalues $\lambda_2,\dotsc,\lambda_N$.

Note that none of the eigenvalues $\lambda_2,\dotsc,\lambda_N$ are equal to zero, i.e., zero is a simple eigenvalue of~$T$. Indeed, let~$\tilde\bfH$ denote the graph of which adjacency matrix $A^{\tilde\bfH}$ is defined by $A_{jk}^{\tilde{\bfH}}=T_{jk}$ for $j\neq k$. By condition (ii) we have $A_{jk}^{\tilde{\bfH}} =0$ if and only if $A_{jk}^{{\bfH}}=0$. This means that $E(\bfH)=E(\tilde{\bfH})$ and therefore $\tilde{\bfH}$ has a spanning diverging tree: by Proposition~\ref{prop:agaev2009} the eigenvalue $0$ of the Laplacian matrix of $\tilde{\bfH}$ (i.e., of the matrix $-T$) is simple, which means that $0$ is a simple eigenvalue of $T$.
We also remark that by condition~(ii) and by the Gershgorin circle theorem, all the eigenvalues of $\mathfrak{S}(T)$ belong to the discs centred in $T_{kk}<0$ and of radius $-T_{kk}$.
We can thus conclude that the eigenvalues $\lambda_2,\dotsc,\lambda_N$ have all a negative real part and so $\Omega t \cdot \mathbf{1}+\theta^o$ is an asymptotically stable relative equilibrium.
\end{proof}

In fact Proposition~\ref{prop:realiseattractor} can be generalized to \eqref{eq:PhaseOscCoup} in a similar way:

\begin{cor}
\label{corstable}
Assume that~$\bfH\in \cH(\bfA)$ admits a spanning diverging tree. Then there is a coupling function $\rg$ such that~\eqref{eq:PhaseOscCoup} has an asymptotically stable relative equilibrium $(\Omega t +\theta^o_1,\dotsc ,\Omega t +\theta^o_N)$ satisfying 
${\cG_{\rg,\bfA}(\theta^o)}=\bfH$.
In other words, there exists a coupling function~$\rg$ that stably realises~$\bfH$ for~\eqref{eq:PhaseOscCoup}.
\end{cor}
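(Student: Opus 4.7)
The plan is to mimic the two-step argument of Proposition~\ref{prop:realiseattractor} essentially verbatim, with the sole adjustment that live zones of $\rg$ are placed only at phase differences corresponding to edges of $\bfH$, while all remaining phase differences $\theta^o_j-\theta^o_k$ (in particular those for edges of $E(\bfA) \setminus E(\bfH)$) are placed in dead zones. Since $\bfH \subset \bfA$, the live zones so placed are indeed activated by edges of $\bfA$ in \eqref{eq:PhaseOscCoup}, and the hypothesis that $\bfH$ admits a spanning diverging tree will enter twice: first to provide enough free parameters in the construction, and second to apply Proposition~\ref{prop:agaev2009} to the linearization.

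For Step~1, pick a generic $\theta^o \in \bbT^N$ with all pairwise differences distinct, and construct a smooth $\rg$ such that (i) $\rg$ is locally null at each $\theta^o_j - \theta^o_k$ with $(j,k) \notin E(\bfH)$, so that $\cG_{\rg,\bfA}(\theta^o)=\bfH$; (ii) $\rg'(\theta^o_j - \theta^o_k) > 0$ for every $(j,k) \in E(\bfH)$; and (iii) $\sum_{j:(j,k)\in E(\bfH)} \rg(\theta^o_j - \theta^o_k)$ is independent of~$k$, with common value~$C$. Condition~(iii) amounts to at most $N-1$ linear equations in the at least $N-1$ free values $\rg(\theta^o_j - \theta^o_k)$, and is solvable because each non-root vertex of the spanning diverging tree has an incoming edge in~$\bfH$. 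Values and first derivatives at disjoint generic points can be prescribed independently by a standard bump-function construction, so (ii) and (iii) are compatible. Setting $\Omega = \omega + C$ then makes $(\Omega t + \theta^o_1, \dotsc, \Omega t + \theta^o_N)$ a relative equilibrium of~\eqref{eq:PhaseOscCoup}.

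For Step~2, linearizing \eqref{eq:PhaseOscCoup} at this equilibrium in coordinates $\chi = \theta - (\Omega t\cdot\mathbf{1} + \theta^o)$ yields
\[
\dot\chi_k = \sum_{j \neq k} A_{jk}\,\rg'(\theta^o_j - \theta^o_k)(\chi_j - \chi_k).
\]
Because $\rg'$ vanishes throughout every dead zone, one has $A_{jk}\rg'(\theta^o_j-\theta^o_k) = A^{\bfH}_{jk}\rg'(\theta^o_j-\theta^o_k)$ for all $j \neq k$; so writing $T_{jk}$ for this common value and $T_{kk} = -\sum_{j\neq k}T_{jk}$, the matrix $-T$ is the weighted Laplacian of a graph $\tilde\bfH$ with $E(\tilde\bfH)=E(\bfH)$. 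The graph $\tilde\bfH$ inherits a spanning diverging tree from $\bfH$, so Proposition~\ref{prop:agaev2009} gives that $0$ is a simple eigenvalue of $-T$ corresponding to the phase-shift direction~$\mathbf{1}$. Together with the Gershgorin circle theorem applied to~$T$ (using $T_{kk}<0$, which is ensured by condition (ii)), the remaining eigenvalues $\lambda_2,\dotsc,\lambda_N$ have strictly negative real part, and asymptotic stability of the relative equilibrium follows.

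The only non-routine point, and the one I would treat with most care, is the compatibility of conditions (ii) and (iii) in the extremal case that $\bfH$ is itself a spanning diverging tree (so $|E(\bfH)|=N-1$): the balance condition then forces $\rg$ to vanish at each of the $N-1$ chosen live-zone points, but since one may still take $\rg'>0$ at those same points (so each lies in a live zone rather than a dead zone) the construction goes through. Beyond this small check, the argument tracks the all-to-all proof essentially word for word, the structural graph $\bfA$ entering only through which phase differences must be forced into dead zones.
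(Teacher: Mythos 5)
Your proposal is correct and follows essentially the same route as the paper, which simply reruns the two-step argument of Proposition~\ref{prop:realiseattractor} with conditions (i)--(iii) and observes that $A_{jk}A^{\bfH}_{jk}\rg'(\theta^o_j-\theta^o_k)$ yields a Laplacian of a graph with the same edge set as~$\bfH$, so Proposition~\ref{prop:agaev2009} and Gershgorin apply unchanged. Your extra check that the balance condition may force $\rg(\theta^o_j-\theta^o_k)=0$ on tree edges while $\rg'>0$ keeps those points in live zones is a welcome detail the paper leaves implicit.
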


\begin{proof}
The proof works in an exactly similar way as in Proposition~\ref{prop:realiseattractor}. Under the same conditions (i),(ii),(iii) above, the linearized  equation at $\Omega t \cdot \mathbf{1}+\theta^o$ reads
\begin{align*}\label{linear2}
\dot\chi_k&=\frac{1}{N}\sum_{j=1}^N A_{jk} A_{jk}^{\bfH}\,\rg'(\theta^o_j-\theta^o_k)(\chi_j-\chi_k),\quad 1\leq k\leq N,
\end{align*}
and the coefficients of the corresponding Laplacian matrix~$T$ are
\begin{align*}
T_{jk} &= A_{jk}A_{jk}^{\bfH} \rg'(\theta^o_j-\theta^o_k) &&\text{for } j\neq k,\\
T_{kk} &= - \sum_{j=1}^N A_{jk} A_{jk}^{\bfH}\,\rg'(\theta^o_j-\theta^o_k)=- \sum_{j=1}^N T_{jk}.
\end{align*}
Denoting again by~$\tilde{\bfH}$ the graph with adjacency matrix $A_{jk}^{\tilde{\bfH}}=T_{jk}$ for $j\neq k$, we have that $
A_{jk}^{\tilde{\bfH}} =0$ if and only if $A_{jk}^{{\bfH}} =0$
by the assumptions made for~$\bfH$. Thus, $\tilde{\bfH}$ admits a spanning diverging tree. As in the proof of Proposition~\ref{prop:realiseattractor}, this gives again the asymptotic stability of the relative equilibrium $(\Omega t +\theta^o_1,\dotsc ,\Omega t +\theta^o_N)$.
\end{proof}

We finish with a brief discussion of a sufficient condition for an effective coupling graph~$\bfH$ to be completely stably realised. Suppose that $\theta\in\inte(\Theta_{\rg,\bfA}(\bfH))$ and write the coupled oscillator network~\eqref{eq:PhaseOscCoup} as $\dot\theta = F(\theta)$. Suppose that the boundary~$\partial\Theta_{\rg,\bfA}(\bfH)$ is a locally $N-1$-dimensional semialgebraic set; this is typically the case as discussed in Section~\ref{sec:SkewProducts}. Let~$\bfn(\theta)$ denote the piecewise defined normal vector pointing into the interior. One can now obtain sufficient conditions for~$\bfH$ being invariant by imposing that~$\langle\bfn(\theta), F(\theta)\rangle>0$ for almost all $\theta\in\partial\Theta_{\rg,\bfA}(\bfH)$.

\section{Effective coupling graphs for networks of two and three oscillators}
\label{sec:examples}

For coupling functions with an arbitrary number of dead zones, all effective coupling graphs can be realised as outlined above. But what is the global picture of the dynamics for the minimal case of a coupling function with a single dead zone? In this section we concentrate on this question by exploring small all-to-all coupled phase oscillator networks~\eqref{eq:PhaseOscSN} 
with a coupling function~$\rg\in\cF(1)$. First, we briefly consider a network of $N=2$ oscillators where there are only 4 possible effective coupling graphs. Then we paint the picture for $N=3$ oscillators; there are 64 possible effective coupling graphs and we explore the dynamics numerically.

\subsection{Networks of two oscillators}

One can easily demonstrate that a single dead zone and a single live zone is sufficient to realise all effective coupling graphs for~\eqref{eq:PhaseOscSN} with $N=2$ oscillators. More precisely, choose any $\rg\in\cF(1)$ with $\LZ(\rg)=[-a,2a]$ for $a<\pi/2$, where all inequalities are understood in the interval $[-\pi,\pi]$. Then
\begin{equation*}
\cG_{\rg}(0,c)=
\begin{cases}
\bfK_2 & \text{ if } c\in(-a,a),\\
\bfP_{1,2} & \text{ if } c\in(a,2a),\\
\bfP_{2,1} & \text{ if } c\in(-2a,-a),\\
\bes_2 & \text{ if } c\in(-\pi,-2a)\cup (2a,\pi).
\end{cases}
\end{equation*}
This shows that there is a single coupling function that realises all four subgraphs of~$\bfK_2$. Note that if $\rg$ is dead zone symmetric then only the undirected graph $\bfK_2$ and $\bes_2$ can be realised (cf.~Proposition~\ref{prop:gprops}(iii)).

\subsection{Networks of three oscillators}

\begin{figure}
\centerline{
\raisebox{3cm}{\textbf{(a)}}\hspace{-0.4cm}
\includegraphics[scale=0.34]{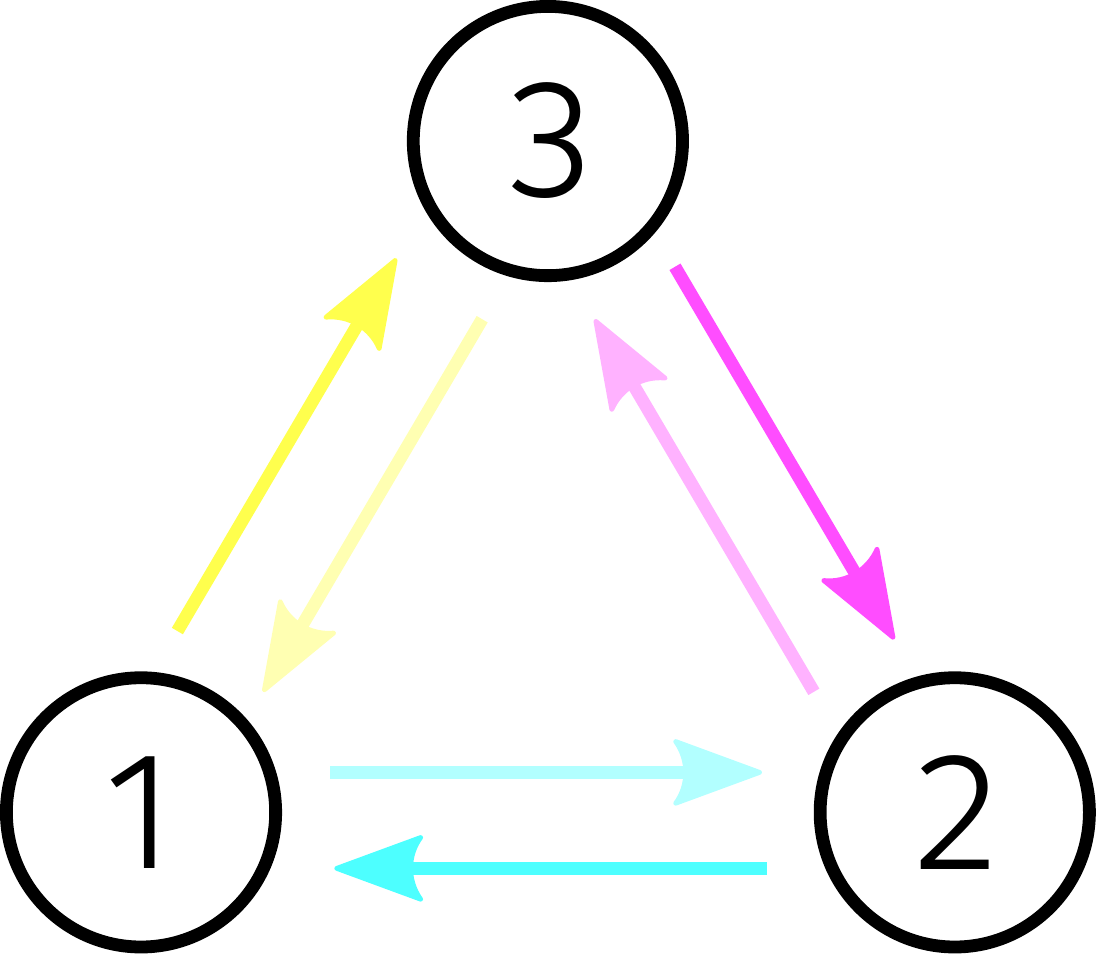}\quad
\raisebox{3cm}{\textbf{(b)}}\hspace{0.2cm}
\includegraphics[scale=0.24]{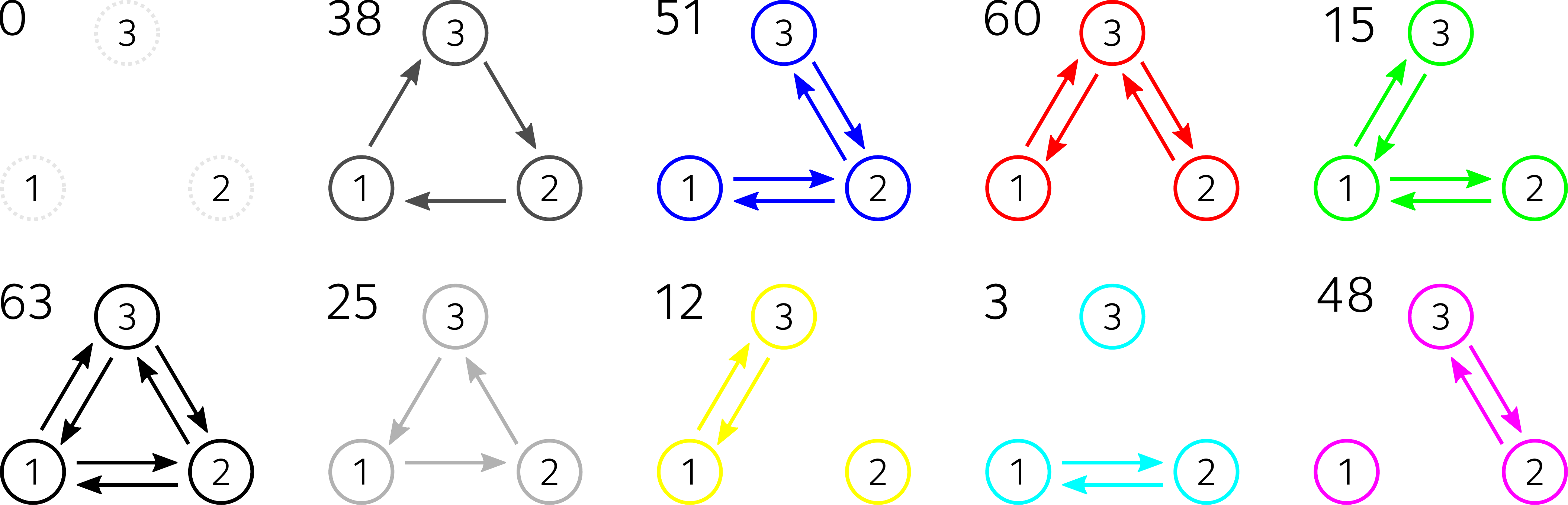}
}
\caption{
We use a colour scheme to identify the graphs in~$\cH_3$. Panel~(a) shows the shades of cyan, magenta, and yellow identified with each directed edge of~$\bfK_3$. If multiple edges are present, the colours are added. Examples of graphs~$\bfH\in\cH_3$ in their associated colours, as well as the corresponding graph numbers~$\nu(\bfH)$, as defined in~\eqref{eq:GN}, are shown in Panel~(b). The subgraphs where all edges to/from a given node are present (and no others) are associated with the colours red, green, and blue. The symmetry that permutes the three nodes acts on the colour scheme by permuting the colour channels. Hence, graphs which are invariant under this symmetry operation have a colour that is invariant under permutation of the colour channels; this includes white for~$\egraph_3$, black for~$\bfK_3$, and shades of gray for the directed cycles $\bfC_{1,2,3},\bfC_{3,2,1}$.
}
\label{fig:ColorExplain}
\end{figure}

We now consider all-to-all coupled networks of $N=3$ oscillators. Since~$\bfK_3$ has~$6$ edges, there are $2^6=64$ different possible effective coupling graphs. By assigning a colour to each edge, we create a scheme that assigns a unique colour to each graph and permutations of the three nodes correspond to permutations of the colour channels. This assignment is outlined in Figure~\ref{fig:ColorExplain} together with examples of graphs coloured in their respective colour. Moreover, the possible effective coupling graphs  can be numbered according to the edges that are present. More specifically, let $\bfH\in\cH_3$ and write~$A^\bfH$ for the associated $3\times 3$ adjacency matrix. Define the \emph{graph number}
\begin{equation}
\nu(\bfH) = A^\bfH_{12}+2A^\bfH_{21}+4A^\bfH_{13}+8A^\bfH_{31}+16A^\bfH_{23}+32A^\bfH_{32}\in\sset{0, \dotsc, 63},
\label{eq:GN}
\end{equation}
which uniquely encodes the realised effective coupling graph as an integer. In particular, we have $\nu(\egraph_3)=0$ and $\nu(\bfK_3) = 63$; more examples are given in Figure~\ref{fig:ColorExplain}.

\begin{figure}
\centering
\raisebox{-7pt}{\includegraphics[scale=1]{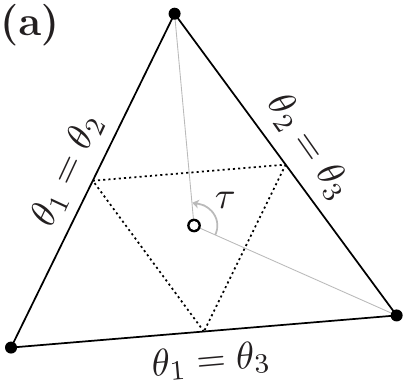}}\qquad\qquad
\includegraphics[scale=1]{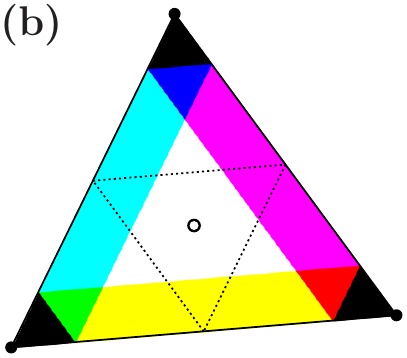}\qquad
\includegraphics[scale=1]{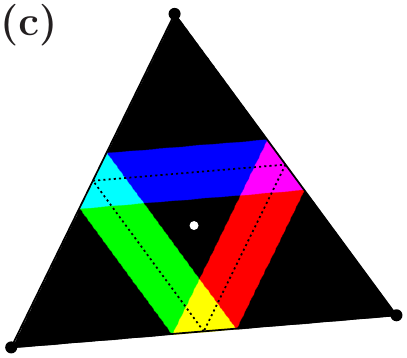}
\caption{
The sets $\Theta_\rg(\bfH)$ for different~$\bfH$ partition the canonical invariant region~$\CIR$ for the fully symmetric system of $N=3$ oscillators. The CIR is sketched in Panel~(a): Its boundary is given by the sets $\theta_1-\theta_2=0$, $\theta_2-\theta_3=0$, and $\theta_3-\theta_1=0$ (black lines) which intersect in~$\Sync$ (black dot,~$\bullet$). The splay phase~$\Splay$ is the centroid (hollow dot, $\circ$) and is the fixed point of the residual~$\Z_3=\langle\tau\rangle$ symmetry which rotates the CIR (indicated by gray lines). Dashed lines indicate phase configurations where one phase difference is equal to~$\pi$.
For a dead zone symmetric coupling function $\rg\in\mathcal{F}(1)$ only the undirected subgraphs of~$\bfK_3$ can be realised; these correspond to the ones shown in Figure~\ref{fig:ColorExplain} excluding the cycles. Panel~(b) shows the partition of the CIR for $\DZ(\rg) = \big(\frac{\pi}{3}, \frac{5\pi}{3}\big)$. Panel~(c) shows the partition for a dead zone symmetric coupling function with $\DZ(\rg) = \big(\frac{5\pi}{6}, \frac{7\pi}{6}\big)$.}
\label{fig:ColoringCIRSym}
\end{figure}

\begin{figure}
\centering
\includegraphics[scale=1]{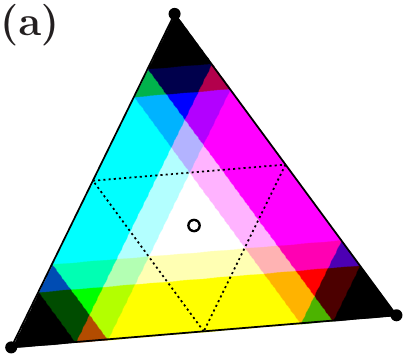}\qquad
\includegraphics[scale=1]{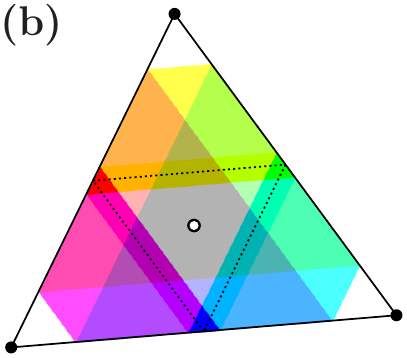}\qquad
\includegraphics[scale=1]{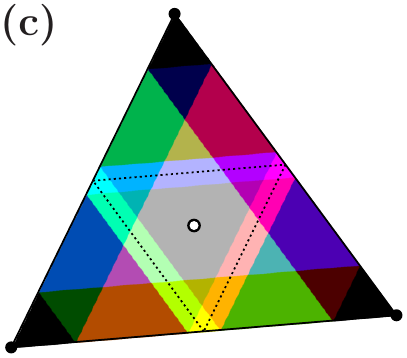}
\caption{
Many effective coupling graphs are possible for $N=3$ oscillators and a general coupling function $\rg\in\mathcal{F}(1)$ with one dead zone. As in Figure~\ref{fig:ColoringCIRSym}, the sets~$\Theta_\rg(\bfH)$ are plotted in the colour corresponding to the effective coupling graph~$\bfH$ in the colour scheme of Figure~\ref{fig:ColorExplain}. We have $\DZ(\rg)=\big(\frac{\pi}{3},\frac{3\pi}{2}\big)$ in Panel~(a), $\DZ(\rg)=\big(-\frac{\pi}{3},\frac{11\pi}{12}\big)$ in Panel~(b) and $\DZ(\rg)=\big(\frac{\pi}{3},\frac{11\pi}{12}\big)$ in Panel~(c).
}
\label{fig:ColoringCIRAsym}
\end{figure}

For a given coupling function~$\rg\in\cF(1)$, the sets $\Theta_\rg(\bfH)$, $\bfH\in\cH_3$, partition the canonical invariant region~$\cC$. To visualize which region of phase space is associated with a given effective coupling graph, we colour the part of~$\cC$ accordingly. 
If~$\rg$ is dead zone symmetric,
the dead zone can be parametrized by a single parameter~$a\in(0,\pi)$ that represents the beginning (or end) of the dead zone. Now suppose that $0\in\LZ(\rg)$ so~$\cG_\rg(\Sync)=\bfK_3$---the case $0\in\DZ(\rg)$ is analogous by ``inverting'' edges (or colours). There are two qualitatively different cases that are shown in Figure~\ref{fig:ColoringCIRSym}: If $0<a<\frac{2\pi}{3}$ then $\cG_\rg(\Splay)=\egraph_3$ and if $\frac{2\pi}{3}<a<\pi$ then $\cG_\rg(\Splay)=\bfK_3$. Many more cases are possible for a general function $\rg\in\cF(1)$; rather than paint a complete picture, we illustrate some cases in Figure~\ref{fig:ColoringCIRAsym}.

The previous considerations were purely in terms of the structure of the effective coupling graph. We now look at examples of the system's dynamics and explore how the effective coupling graph changes along trajectories. To this end, we examine the dynamics of~\eqref{eq:PhaseOscSN} with $N=3$ and the coupling function
\begin{equation}
\label{eq:KSDZ}
\rg(\psi)=-\sin(\psi+\alpha)h(\psi)\quad\text{where }h(\psi)=\frac{1}{2}\left(\tanh(\eps^{-1}(\cos b-\cos(a-\psi))+1\right)
\end{equation}
for constants $a\in[0,2\pi)$, $b\in[0,\pi)$, $\eps>0$ and $\alpha\in [0,2\pi)$. This coupling function is a modulated Kuramoto--Sakaguchi coupling with phase-shift parameter~$\alpha$. We call
\begin{equation}
\DZ^\eps(\rg)=\set{\theta}{\abs{\theta-a}<b}
\label{eq:DZapprox}
\end{equation}
the approximate dead zone of the coupling function~\eqref{eq:KSDZ} since in the limit $\eps\rightarrow 0$ the coupling function~\eqref{eq:KSDZ} has a single dead zone $\DZ(\rg)=\set{\theta}{\abs{\theta-a}<b}$ centred at~$a$ of half-width~$b$; here the inequality is to be understood modulo $2\pi$. In the following we fix $\eps=5\times 10^{-3}$ and $\alpha=1.3$. 

\begin{figure}
\centerline{
\includegraphics[width=14cm]{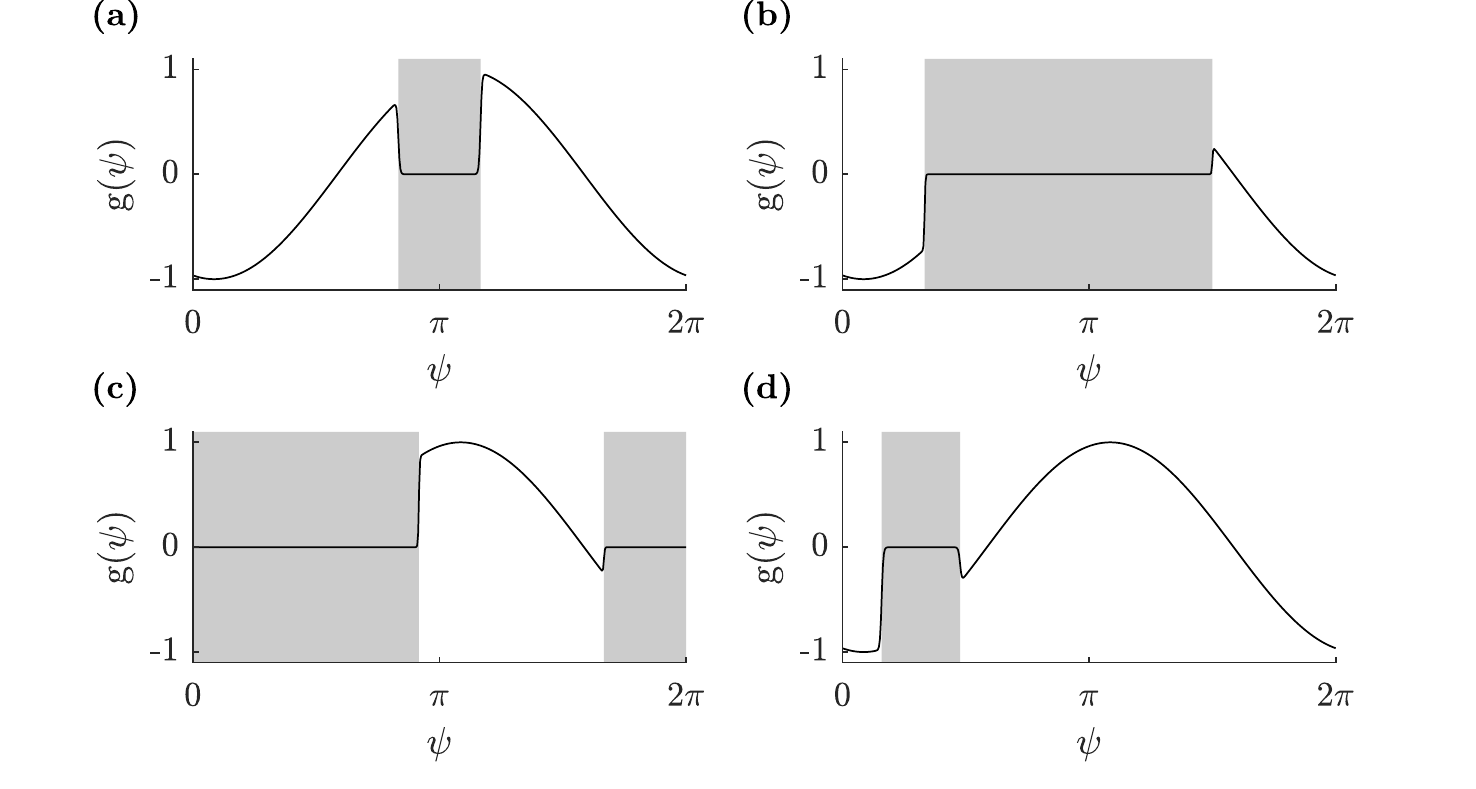}
}
\caption{
The coupling functions~\eqref{eq:KSDZ} provide examples of coupling functions $\rg\in\mathcal{F}(1)$ with one dead zone; here $\eps=5\times 10^{-3}$ and $\alpha=1.3$. The shaded area indicates the dead zone of the coupling function. 
In Panel~(a) we have a dead zone symmetric coupling function with $\DZ(\rg)\approx \big(\frac{5\pi}{6}, \frac{7\pi}{6}\big)$; cf.~Fig~\ref{fig:ColoringCIRSym}(b). 
In Panel~(b) we have $\DZ^\eps(\rg)=\big(\frac{\pi}{3},\frac{3\pi}{2}\big)$; cf.~Fig~\ref{fig:ColoringCIRAsym}(a).
In Panel~(c) we have $\DZ^\eps(\rg)=\big(-\frac{\pi}{3},\frac{11\pi}{12}\big)$; cf.~Fig~\ref{fig:ColoringCIRAsym}(b).
In Panel~(d) we have $\DZ^\eps(\rg)= \big(0.5, 1.5\big)$.}
\label{fig:dzN3f1}
\end{figure}

We explore the dynamics for four examples of coupling functions~\eqref{eq:KSDZ} with approximate dead zones shown in Figure~\ref{fig:dzN3f1}. 
Recall that without dead zones (and any~$N$), the dynamics for Kuramoto--Sakaguchi coupling is well known: Depending on the parameter~$\alpha$, either full synchrony~$\Sync$ or an anti-phase configuration is stable; see also~\cite{Watanabe1994}. 
Now for each of the Kuramoto--Sakaguchi coupling functions with a dead zone, Figure~\ref{fig:dzN3f2} shows a partition of phase space by effective coupling graph (using the same colour scheme as in Figures~\ref{fig:ColoringCIRSym} and~\ref{fig:ColoringCIRAsym}) together with a phase portrait for trajectories of~\eqref{eq:PhaseOscSN} integrated forwards from a grid of initial conditions. Note that the dynamics in~$\Theta_\rg(\egraph_3)$---coloured in white---are trivial and we find, for example, periodic trajectories that visit~$\Theta_\rg(\bfH)$ for multiple~$\bfH\in\cH_3$ as time evolves. 
Such dynamics are impossible for Kuramoto--Sakaguchi coupling without dead zones.

Finally, we consider the entirety of the effective interaction graphs that are realised by a given coupling function. Figure~\ref{fig:dzN3f3} shows the set of realised effective coupling graphs corresponding to the coupling functions in Figure~\ref{fig:dzN3f1} sorted by their graph numbers. We are unable to find a single coupling function~$\rg$ with one live zone that can realise all possible effective coupling graphs---however, we note that a combination of two coupling functions~(for example, (b) and~(c)) suffice to cover all cases.

\begin{figure}
\centerline{
\includegraphics[width=18cm]{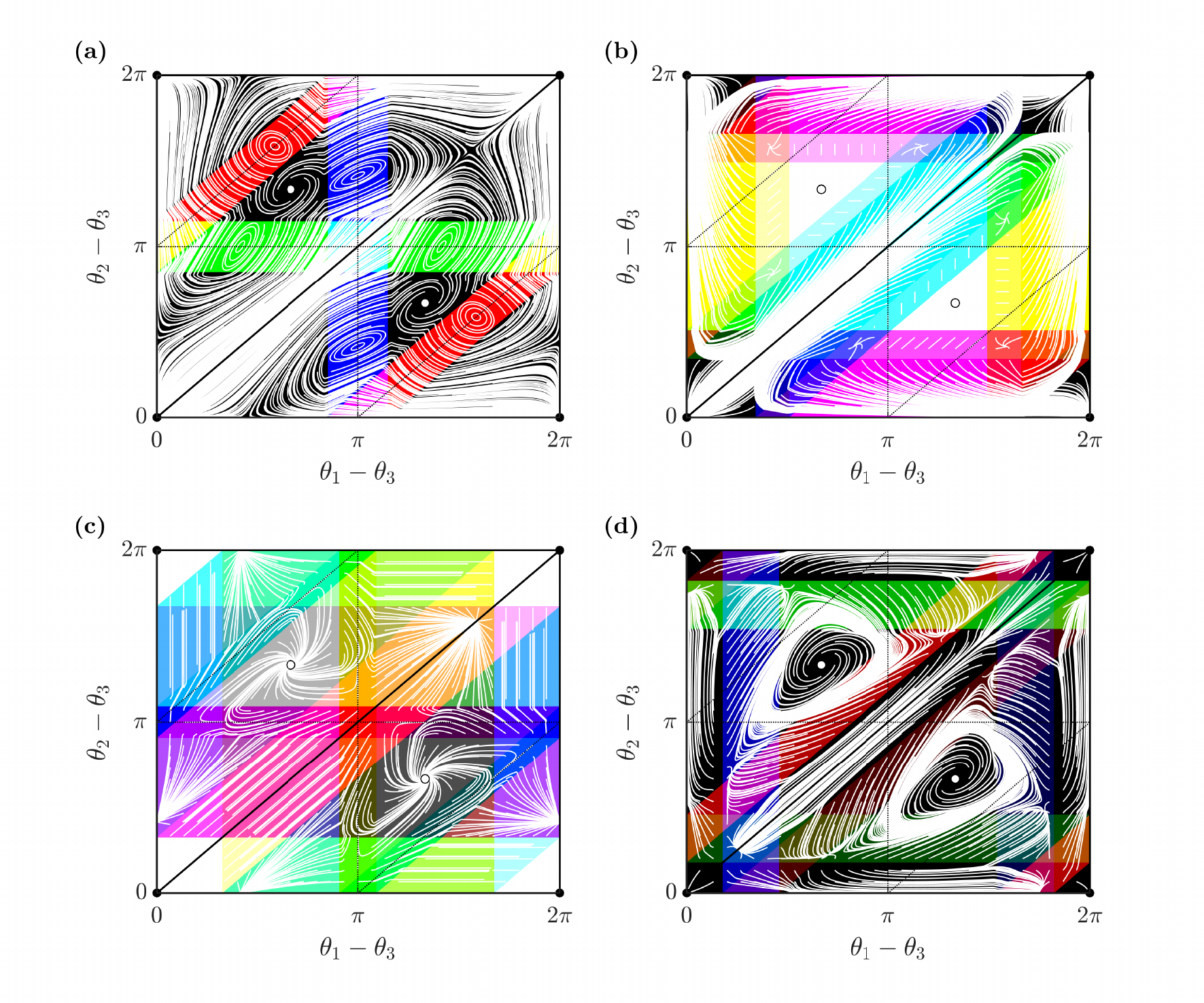}
}
\caption{The phase space for~\eqref{eq:PhaseOscSN} with $N=3$ oscillators and coupling function~$\rg$ with one dead zone as in~\eqref{eq:KSDZ} and parameters as in Figure~\ref{fig:dzN3f1}(a-d) respectively.
As in Figures~\ref{fig:ColoringCIRSym} and~\ref{fig:ColoringCIRAsym}, black lines indicate  the sets $\theta_1-\theta_2=0$, $\theta_2-\theta_3=0$, and $\theta_3-\theta_1=0$ which intersect in~$\Sync$ (black dot, $\bullet$) that bound~$\CIR$ and its symmetric image. The splay phases are indicated by hollow dots ($\circ$) and dashed lines indicate phase configurations where one phase difference is equal to~$\pi$.
As above, the colouring indicates the effective coupling graph overlaid by trajectories started on a regular grid, shown in white---a very wide range of effective coupling graphs are realised; see Figure~\ref{fig:dzN3f3}. For (b) and (c) there are white regions of trivial dynamics where no trajectories are present: these correspond to the effective coupling graph~$\bes_3$. Finally, note that for~(a) and~(c) there are trajectories that visit~$\Theta_\rg(\bfH)$ for multiple~$\bfH\in\cH_3$ as time evolves.
}
\label{fig:dzN3f2}
\end{figure}

\begin{figure}
\centerline{
\includegraphics[width=14cm]{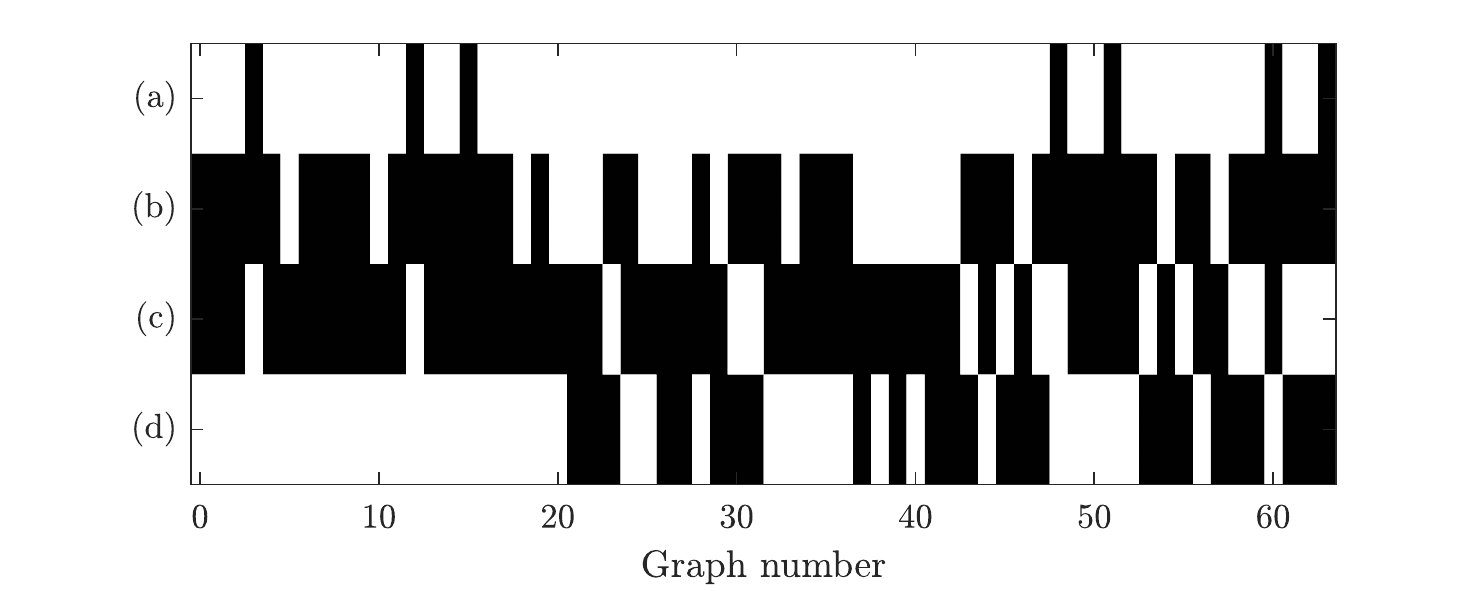}
}
\caption{The possible effective coupling graphs realised using $N=3$ and~\eqref{eq:KSDZ} for parameters as in Figure~\ref{fig:dzN3f1} and some~$\theta$. Black indicates $\Theta_\rg(\bfH)\neq\emptyset$ for~$\bfH\in\cH_3$ with a given graph number, and white indicates $\Theta_\rg(\bfH)=\emptyset$. Since~(a) is a dead zone symmetric coupling function, only undirected subgraphs are realised. By contrast, taking all effective coupling graphs that are realised by coupling functions~(b) and~(c) together, one obtains all possible subgraphs of~$\bfK_3$.}
\label{fig:dzN3f3}
\end{figure}

\section{General observations and discussion}
\label{sec:discuss}

In this paper we have demonstrated that the \emph{effective} coupling graph of a dynamical network is subtly related to network structure, the system state and the presence of dead zones in the interaction. Working with coupled phase oscillator networks~\eqref{eq:PhaseOscCoup}, we give constructions of coupling functions~$\rg$ that achieve any desired subnetwork, possibly using the same $\rg$ (Corollary~\ref{cor:allgraphs}), even in the special and {highly symmetric case} of all-to-all coupling \eqref{eq:PhaseOscSN}. 

In terms of structural questions, we obtain a number of conditions on~$\rg$ and~$\theta$ that guarantee the presence of certain coupling structures in $\cG_{\rg}(\theta)$. There {are} several natural questions that relate to the number, location and lengths of the dead zones to the set of realisable effective coupling graphs. For example, the coupling functions in Figure~\ref{fig:dzN3f1}(b,c) together can realise all possible (embedded) subgraphs of~$\bfK_3$. Two specific questions in this direction for \eqref{eq:PhaseOscSN} are:
\begin{itemize}
\item What is the minimum number $n$ of dead zones such that there is a $\rg\in\cF(n)$ that realises all $\bfH\in\cH_N$?
\item For a fixed number $\ell$ of dead zones, what is minimum number $m$ of coupling functions with~$\ell$ dead zones, 
that between them realise any given $\bfH\in\cH_N$?\footnote{Figure~\ref{fig:dzN3f3} gives evidence that $m\leq2$ for $N=3$ and $\ell=1$.}
\end{itemize}

In terms of the dynamics, probably the most interesting problems relate to how the dynamics of the coupled system interacts with the effective coupling as~$\cG_{\rg}(\varphi_t(\theta^o))$ changes along a trajectory starting at $\theta^o$. This is briefly explored in Section~\ref{sec:transitions} and in the examples in Section~\ref{sec:examples}, but we do not have a complete picture as yet. For example, can one determine which effective coupling graphs can be stably realised, and which can be only transiently realised? What does the passage through effective coupling graphs tell us about the underlying dynamics? How does the partition of phase space into basins of attraction map on to the partition of phase space by $\Theta_{\rg}(\bfH)$?

Here we only considered phase oscillator networks with coupling functions that have simple dead zones, i.e., there are a finite set of nontrivial intervals on which the coupling function vanishes.
This could be developed in three directions: First, one may want to examine coupling with an infinite set of dead zones (though this is likely to be not of much relevance to applications). Second, one could look at the case where the coupling function is locally constant on several intervals where is takes distinct values. Third, one would like to get explicit results for coupling functions with approximate dead zones, i.e., intervals on which the coupling functions are small but nonzero. In this direction, it would be desirable to prove explicit results concerning how well (and over what time scale) networks with dead zones approximate networks where interaction between nodes is small (but non-zero) in parts of phase space.

Finally, we have restricted ourselves here to discussion of these questions for coupled phase oscillators where all interactions are governed by a single periodic phase interaction/coupling function~$\rg$. 
On the one hand, it would be desirable to link the coupling functions considered here to nonlinear oscillator networks through a phase reduction. On the other hand, it would be interesting to explore how these results can be generalised, for example, to more general dynamical systems with pairwise coupling of the form 
\[\dot{x}_k=f(x_k)+\sum_{j\neq k}g(x_j,x_k)\]
for $x_k\in\bbR^d$, where $g(x_j,x_k)$ is null in some open subset of $\bbR^d\times \bbR^d$. Finally, dead zones could also be present in multi-way interactions \cite{Staetal2017, AshBicRod2016}, i.e., interactions where the coupling to~$x_k$ depends simultaneously on the relative position of several of the~$x_j$ with $j\neq k$, {and not only on one of them}.

\subsection*{Acknowledgements}

The authors thank B.~Fernandez, Yu.~Maistrenko and T.~Pereira for useful discussions, and the referees for some very useful comments. PA and CP were funded by EPSRC Centre for Predictive Modelling in Healthcare grant EP/N014391/1. This study did not generate any new data.

\bibliographystyle{unsrt}

\end{document}